\DeclareMathOperator*{\argmin}{arg\,min}
\DeclareMathOperator{\diag}{diag}
\DeclareMathOperator{\dist}{dist}
\DeclareMathOperator{\proj}{proj}
\newtheorem{remark}{Remark}
\newtheorem{example}{Example}
\newtheorem{proposition}{Proposition}
\newtheorem{assumption}{Assumption}
\newtheorem{lemma}{Lemma}
\newtheorem{corollary}{Corollary}
\newcommand{\norm}[1]{\left\lVert#1\right\rVert}
\newcommand{\ubar}[1]{\underaccent{\bar}{#1}}
\def\N{{\mathbb{N}}}
\def\R{{\mathbb{R}}}
\def\av{{\mathbold{a}}}
\def\bv{{\mathbold{b}}}
\def\cv{{\mathbold{c}}}
\def\dv{{\mathbold{d}}}
\def\e{{\mathbold{e}}}
\def\uv{{\mathbold{u}}}
\def\w{{\mathbold{w}}}
\def\x{{\mathbold{x}}}
\def\y{{\mathbold{y}}}
\def\bphi{{\mathbold{\varphi}}}
\def\A{{\mathbold{A}}}
\def\B{{\mathbold{B}}}
\def\D{{\mathbold{D}}}
\def\E{{\mathbold{E}}}
\def\F{{\mathbold{F}}}
\def\G{{\mathbold{G}}}
\def\K{{\mathbold{K}}}
\def\I{{\mathbold{I}}}
\def\P{{\mathbold{P}}}
\def\Q{{\mathbold{Q}}}
\def\Rm{{\mathbold{R}}}
\def\U{{\mathbold{U}}}
\def\T{{\mathbold{T}}}
\def\V{{\mathbold{V}}}
\def\W{{\mathbold{W}}}
\def\X{{\mathbold{X}}}
\def\Y{{\mathbold{Y}}}
\def\Lm{{\mathbold{\Lambda}}}
\def\0{{\mathbf{0}}}
\def\1{{\mathbf{1}}}
\def\lmin{{\ubar{\lambda}}}
\def\lmax{{\bar{\lambda}}}
\def\Ts{{T_\mathrm{s}}}
\begin{document}
\title{Internal Model-Based Online Optimization}

\author{Nicola Bastianello, \IEEEmembership{Member, IEEE}, Ruggero Carli, \IEEEmembership{Member, IEEE}, and Sandro Zampieri, \IEEEmembership{Fellow, IEEE}
\thanks{Part of this work was supported by the Research Project PRIN 2017 Advanced Network Control of Future Smart Grids funded by the Italian Ministry of University and Research (2020-2023) http://vectors.dieti.unina.it and partially by MIUR (Italian Minister for Education) under the initiative ``Departments of Excellence" (Law 232/2016).}
\thanks{N. Bastianello is with the School of Electrical Engineering and Computer Science, KTH Royal Institute of Technology, Sweden (e-mail: nicolba@kth.se). The main part of the work was carried out when N. Bastianello was at the University of Padova, Italy.}
\thanks{R. Carli and S. Zampieri are with the Department of Information Engineering (DEI), University of Padova, Italy (e-mail: carlirug@dei.unipd.it, zampi@dei.unipd.it)}
}

\maketitle

\begin{abstract}
In this paper we propose a model-based approach to the design of online optimization algorithms, with the goal of improving the tracking of the solution trajectory (trajectories) w.r.t. state-of-the-art methods. We focus first on quadratic problems with a time-varying linear term, and use digital control tools (a robust internal model principle) to propose a novel online algorithm that can achieve zero tracking error by modeling the cost with a dynamical system. We prove the convergence of the algorithm for both strongly convex and convex problems. We further discuss the sensitivity of the proposed method to model uncertainties and quantify its performance. We discuss how the proposed algorithm can be applied to general (non-quadratic) problems using an approximate model of the cost, and analyze the convergence leveraging the small gain theorem. We present numerical results that showcase the superior performance of the proposed algorithms over previous methods for both quadratic and non-quadratic problems.
\end{abstract}

\begin{IEEEkeywords}
online optimization, digital control, robust control, structured algorithms, online gradient descent
\end{IEEEkeywords}

\section{Introduction}\label{sec:introduction}
In applications ranging from control \cite{liao-mcpherson_semismooth_2018,paternain_realtime_2019}, to signal processing \cite{hall_online_2015,fosson_centralized_2021,natali_online_2021}, to machine learning \cite{shalev_online_2011,dixit_online_2019,chang_distributed_2020}, recent technological advances have made the class of \emph{online optimization} problems of central importance. These problems are characterized by cost functions that vary over time, capturing the complexity of dynamic environments and changing optimization goals \cite{dallanese_optimization_2020,simonetto_timevarying_2020}. Online problems require the design of solvers that can be applied in real time, with the objective of tracking the time-varying solution(s) within some precision.
Formally, we are interested in solving the following
\begin{equation}\label{eq:generic-online-optimization}
	\x_k^* = \argmin_{\x \in \R^n} f_k(\x)
\end{equation}
where consecutive problems arrive $\Ts > 0$ seconds apart. A first approach to developing online algorithms is to repurpose methods from static optimization (say, gradient descent) by applying them to each problem in the sequence. The convergence properties of the resulting \emph{unstructured} \cite{simonetto_timevarying_2020} online algorithms have been studied for different classes of problems, \emph{e.g.} \cite{dixit_online_2019,shalev_online_2011,hall_online_2015,fosson_centralized_2021}. In general, it is possible to prove the convergence of these algorithms' output to a neighborhood of the dynamic solution trajectory (assuming, for simplicity, its uniqueness), and to bound its radius as a function of the problem properties \cite{dallanese_optimization_2020,simonetto_timevarying_2020}. 

However, unstructured algorithms are passive with respect to the dynamic nature of online problems, since they treat each problem in the sequence as self-standing. To remedy this fact, a broad part of the literature has been devoted to the design and analysis of \emph{structured} algorithms \cite{simonetto_timevarying_2020}, which aim to leverage knowledge of the problem in order to improve the performance by reducing the tracking error. Usually, this knowledge comes in the form of as a set of assumptions that bound the rate of change of the problem, say by bounding the difference between consecutive cost functions. A particular class of structured algorithms is that of \emph{prediction-correction} methods, in which we build an approximation of a future problem from past observations -- the prediction -- and use it to warm-start the solution of the next problem \cite{simonetto_class_2016,simonetto_prediction_2017,fazlyab_prediction_2018,li_online_2021}. Structured methods have been shown to achieve smaller tracking error than their unstructured counterparts, highlighting their benefits. In this paper we are interested in carrying this structured approach to online optimization one step further, \emph{by exploiting control-theoretical tools to design novel online algorithms which can track the optimal trajectory with greater precision than unstructured methods.}

Before we detail our proposed contribution, we review some works at the fruitful intersection of control theory and (online) optimization. An important class of online problems is that analyzed in \emph{feedback optimization}, in which the goal is to drive the output of a system to the optimal trajectory of an online optimization problem \cite{bernstein_online_2019,colombino_online_2020,hauswirth_timescale_2021}. This set-up, which includes model predictive control applications \cite{liao-mcpherson_semismooth_2018}, is characterized by the interconnection of an online algorithm and a system, and control theory has been leveraged to analyze the stability of this closed loop. 
While in feedback optimization we employ optimization techniques to solve a control problem, control theory has also proved useful in the design and analysis of optimization algorithms, see \emph{e.g.} \cite{lessard_analysis_2016,sundararajan_canonical_2019,zhang_unified_2021,scherer_convex_2021,franca_gradient_2021}.
Finally, we mention \cite{shahrampour_online_2017,shahrampour_distributed_2018} in which online algorithms are developed under the assumption that the optimal trajectory evolves according to a linear dynamical system.

Before presenting the proposed contribution, we discuss two motivating examples.

\begin{example}[Online learning]\label{ex:online-learning}
Consider an online learning problem, in which we need to train a learner based on time-varying data \cite{shalev_online_2011}. We are then interested in algorithms that can exploit a (possibly rudimentary) model of the cost variation to improve accuracy of the learned model in the long run.
\end{example}

\begin{example}[Signal processing over sensors network]
Consider a signal processing problem in which the data are provided by measurements taken by a networks of sensors. In this context, one can envision two sources of time-variability, one stemming from the dynamic nature of the process being monitored, and the other by the possible intermittent failure of the sensors to deliver a new measurement.
\end{example}

As discussed so far, the focus of this paper is designing structured online algorithms that rely on a model of the dynamic cost, as derived \emph{e.g.} from first principles or historical data, and apply control theoretical tools in order to achieve better performance in the long run. Assuming that the cost is smooth, our approach is to recast the online optimization problem as the goal of driving the gradient of the time-varying cost (the plant) to zero. In turn this implies perfect tracking of the optimal trajectory, an achievement which unstructured methods cannot reach in general. We first explore this idea as applied to quadratic problems with a time-varying linear term, which arise for example in signal processing \cite{dixit_online_2019} and machine learning \cite{shalev_online_2011}. In the context of quadratic problems we leverage tools from digital control to develop a novel online algorithm which, making use of a \emph{model} of the cost as a dynamical system, can achieve zero tracking error. In particular, we employ the internal model principle in combination with robust control theory to design the algorithm. We prove the convergence of the proposed method both for strongly convex (to the unique optimal trajectory) and convex problems (to the time-varying set of solutions). The proposed algorithm however relies on the knowledge of the cost model, which in practice may be inaccurate. Therefore, we analyze its sensitivity to inexact models, by characterizing a bound for the tracking error as a function of the model uncertainty. After laying the groundwork by focusing on a specific class of quadratic problems, we then discuss the application of the proposed algorithms to general quadratic (with a time-varying Hessian) and non-quadratic problems as well. We show how the proposed methods can be applied to any online (smooth) optimization problem with the use of an approximate cost model, and analyze their convergence to a neighborhood of the optimal trajectory by leveraging the small gain theorem. Finally, we showcase the better performance of the algorithms as compared to state-of-the-art alternatives, both for quadratic and non-quadratic problems. The contribution is summarized in the following points:

\begin{enumerate}	
	\item We propose a \emph{control-based online algorithm} that leverages a model of the cost to improve the performance over unstructured methods. In particular, we show that for quadratic problems with a time-varying linear term the algorithm tracks with zero error the optimal trajectory (for strongly convex problems) or the optimal set (for convex problems).
	
	\item We discuss the sensitivity of the proposed method to model uncertainties, and provide an upper bound to the tracking error achieved when an inexact model is used.
	
	\item We show how the proposed structured algorithm can be applied to general (non-quadratic) online problems by choosing a suitable approximate model. Interpreting the inexact model as a source of disturbance, we then provide a convergence analysis of the resulting algorithms.
	
	\item We present numerical results that showcase the superior performance of the proposed algorithms with respect to state-of-the-art methods, both for quadratic and non-quadratic problems.
\end{enumerate}

\smallskip

\paragraph*{Outline}
In section~\ref{sec:control-based-algorithm} we propose and discuss the control-based online algorithm. Section~\ref{sec:convergence} analyzes the convergence for quadratic and non-quadratic problems. Section~\ref{sec:simulations} presents some numerical results and section~\ref{sec:conclusions} concludes the paper.

\paragraph*{Notation}
We denote by $\N$, $\R$ the natural, real numbers, respectively, and by $\R[z]$ the space of polynomials in $z$ with real coefficients.
Vectors and matrices are denoted by bold letters, \emph{e.g.} $\x \in \R^n$ and $\A \in \R^{n \times m}$. $\I$ denotes the identity, $\0$ and $\1$ denote the vectors of all $0$s and $1$s. 
The Euclidean norm and inner product are $\norm{\cdot}$ and $\langle \cdot, \cdot \rangle$. $\dist(\x, \X) = \min_{\y \in \X} \norm{\x - \y}$ denotes the distance of $\x$ from a set $\X \subset \R^n$.
Given a transfer matrix $\B(z) \in \R^{n \times n}$ we write $\norm{\B(z)}_\infty = \max_{\theta \in [0, 2\pi]} \sigma_{\max}(\B(e^{j \theta}))$ where $\sigma_{\max}(\cdot)$ denotes the maximum singular value.
A positive semi-definite (definite) matrix is denoted as $\A \succeq 0$ ($\A \succ 0$). $\otimes$ denotes the Kronecker product.
$\diag$ denotes a (block) diagonal matrix built from the arguments.
Let $f : \R^n \to \R$, we denote by $\nabla f$ its gradient. $f$ is $\lmin$-strongly convex, $\lmin > 0$, iff $f - (\lmin/2) \norm{\cdot}^2$ is convex, and $\lmax$-smooth iff $\nabla f$ is $\lmax$-Lipschitz continuous.

\section{Control-based Online Optimization}\label{sec:control-based-algorithm}
We focus first on the class of following class of online quadratic problems
\begin{equation}\label{eq:online-quadratic}
	f_k(\x) := \frac{1}{2} \x^\top \A \x +  \x^\top b_k
\end{equation}
in which we assume that the symmetric matrix $\A$ is fixed and that only $\bv_k$ is time-varying. Restricting our attention to these problems will allow us to design the algorithm proposed in this section. In section~section~\ref{sec:convergence}, its convergence will be first proved in the context described in~\eqref{eq:online-quadratic}, and then extended to more general (not necessarily quadratic) problems.

In the following we assume that Assumption~\ref{as:generic-online-problem}~and~\ref{as:model-b} hold.

\medskip 
\begin{assumption}[Strongly convex and smooth]\label{as:generic-online-problem}
The symmetric matrix $\A$ is such that $\lmin \I \preceq \A = \A^\top \preceq \lmax \I$, with $0 < \lmin \leq \lmax < \infty$.
This is equivalent to imposing that the cost functions $\{ f_k \}_{k \in \N}$ are $\lmin$-strongly convex and $\lmax$-smooth for any time $k \in \N$.
\end{assumption}

\begin{assumption}[Model of $\bv_k$]\label{as:model-b}
The coefficients of the linear term $\{ \bv_k \}_{k \in \N}$ have a rational $\mathcal{Z}$-trasform, namely
\begin{equation}\label{eq:B-z-transform}
	\mathcal{Z}[\bv_k] = \B(z) = \frac{\B_\mathrm{N}(z)}{B_\mathrm{D}(z)}, \quad B_\mathrm{D}(z) = z^m + \sum_{i = 0}^{m-1} b_i z^i
\end{equation}
for some $\B_\mathrm{N}(z) \in \R^n[z]$, where the poles of $B_\mathrm{D}(z)$ are all marginally or asymptotically stable.
\end{assumption}
\medskip 

Assumption~\ref{as:generic-online-problem} is widely used in online optimization \cite{dallanese_optimization_2020}, since the strong convexity of the cost implies that each problem in~\eqref{eq:generic-online-optimization} has a unique minimizer, and we can define the optimal trajectory $\{ \x_k^* \}_{k \in \N}$. 
Assumption~\ref{as:model-b} instead provides the model of the cost function used for the design of the proposed algorithm. 
We rule out unstable modes in $\bv_k$, since they cause unbounded growth of $\norm{\x_k^* - \x_{k-1}^*}$, which is the distance between subsequent minimizers.  Observe that the assumption that $\norm{\x_k^* - \x_{k-1}^*}$ is bounded for all $k \in \N$ is standard in online optimization \cite{dallanese_optimization_2020}. Nevertheless the proposed model is general enough to capture any kind of vector-valued signal $\bv_k$ characterized by a finite number of frequencies. We remark moreover that \emph{only knowledge of the denominator} $B_\mathrm{D}(z)$ is required, whereas the numerator does not play any role in the algorithm.

\begin{remark}[Gradient oracle]
In the following we assume that only an \emph{oracle of the gradient} can be accessed by the online algorithm, alongside the bounds on $\A$'s eigenvalues $\lmin$, $\lmax$. Since the algorithm does not know $\A$, then the closed form solution of the optimization when~\eqref{eq:online-quadratic} holds, that is $\x_k^* = - \A^{-1} \bv_k$, cannot be computed. 
This assumption is in accordance with frameworks adopted by all gradient methods that our technique aims to improve.
\end{remark}

\begin{remark}[State of the art]\label{rem:online-gradient}
A widely used algorithm that can be applied to these time-varying problems is the \textit{online gradient} method characterized by $\x_{k+1} = \x_k - \alpha \nabla f_k(\x_k)$. Under Assumption~\ref{as:generic-online-problem} and provided that $\norm{\x_k^* - \x_{k-1}^*}$ is bounded for any $k$, then it can be proved that the online gradient satisfies $\limsup_{k \to \infty} \norm{\x_k - \x_k^*} \leq R < \infty$ when $\alpha \in (0, 2/\lmax)$ \cite{dallanese_optimization_2020}. The proposed control-based approach will allow us to design an algorithm that can instead guarantee $\limsup_{k \to \infty} \norm{\x_k - \x_k^*} = 0$ for~\eqref{eq:online-quadratic}.
\end{remark}

\subsection{Control scheme}\label{subsec:control-scheme}
In order to drive the gradient signal $\e_k := \nabla f_k(\x_k)$ to zero asymptotically, we set-up the control scheme depicted in Figure~\ref{fig:block-diagram}. In the block diagram we see that the gradient $\e_k$ is connected in feedback with a controller $C(z)$ -- to be designed -- which produces the output $\x_k$ that is the estimate of the minimizer $\x_k^*$ provided by the algorithm.

\begin{figure}[!ht]
\centering
\includegraphics[scale=0.8]{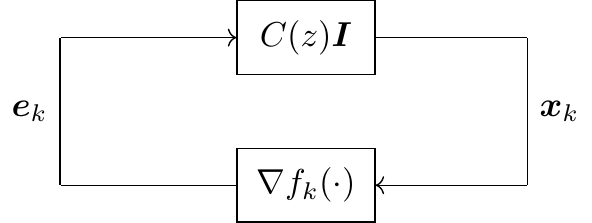}
\caption{The control scheme designed to solve~\eqref{eq:generic-online-optimization}. Recall that in case \eqref{eq:online-quadratic} holds, then $\nabla f_k(\x) = \A \x + \bv_k$.} 
\label{fig:block-diagram}
\end{figure}

Our goal then is to design $C(z)$ so that $\e_k$ converges to zero. We can do this by leveraging the \emph{internal model principle} \cite{fadali_digital_2019}, namely by choosing
\begin{equation}\label{eq:controller-structure}
	C(z) = \frac{C_\mathrm{N}(z)}{B_\mathrm{D}(z)}, \quad \text{with} \quad C_\mathrm{N}(z) = \sum_{i = 0}^{m-1} c_i z^i,
\end{equation}
where we have to find a polynomial $C_\mathrm{N}(z)$ of degree less than $m$ (to guarantee that $C(z)$ is \emph{strictly proper}) such that  the closed loop system is  asymptotically stable.

By Figure~\ref{fig:block-diagram} we argue that $\X(z) = C(z) \E(z)$ and $\E(z) = \A \X(z) + \B(z)$, and combining them with~\eqref{eq:controller-structure} we get
\begin{equation}\label{eq:z-transform-e}
	\E(z) = \left( B_\mathrm{D}(z) \I - C_\mathrm{N}(z) \A \right)^{-1} \B_\mathrm{N}(z). 
\end{equation}
Therefore the poles of $\E(z)$, when using the controller structure~\eqref{eq:controller-structure}, come from the term $\left( B_\mathrm{D}(z) \I - C_\mathrm{N}(z) \A \right)^{-1}$. Hence, to achieve convergence to zero, we need to choose $C_\mathrm{N}$ such that these poles are stable.
In the next section we will show how the design of $C_\mathrm{N}(z)$ can be translated into a robust control problem.

\subsection{Designing the robust stabilizing controller}\label{subsec:stabilizing-controller}

Observe first that the problem can be simplified. Consider the eigendecomposition of $\A = \V \Lm \V^\top$, with $\V^\top \V = \I$ and $\Lm = \diag\{ \lambda_i \}_{i = 1}^n$, $\lambda_i \in [\lmin, \lmax]$. Using it in~\eqref{eq:z-transform-e} yields
\begin{equation}\label{eq:z-transform-e-eigendecomposition}
	\E(z) = \V \left( B_\mathrm{D}(z) \I - C_\mathrm{N}(z) \Lm \right)^{-1} \V^\top \B_\mathrm{N}(z).
\end{equation}
We remark that, even though the eigendecomposition of $\A$ is used in this theoretical discussion, \emph{we do not need to know it to implement algorithm~\eqref{eq:control-based-algorithm}}.
By~\eqref{eq:z-transform-e-eigendecomposition} we see that the poles of $\E(z)$ in the scheme of Figure~\ref{fig:block-diagram} only come from the inverse of the diagonal matrix $B_\mathrm{D}(z) \I - C_\mathrm{N}(z) \Lm$. Namely, the poles of $\E(z)$ are the roots of the $n$ polynomials $\left\{ B_\mathrm{D}(z) - C_\mathrm{N}(z) \lambda_i \right\}_{i = 1}^n$,
$\lambda_i \in [\lmin, \lmax]$. Therefore, in order to ensure asymptotic stability, it is sufficient to choose a $C_\mathrm{N}(z)$ that stabilizes the polynomials
$$
	p(z;\lambda) := B_\mathrm{D}(z) - C_\mathrm{N}(z) \lambda = z^m + \sum_{i = 0}^{m-1} (b_i - \lambda c_i) z^i
$$
for all $\lambda \in [\lmin, \lmax]$. This ensures that, as long as the eigenvalues of $\A$ lie in the range $[\lmin, \lmax]$, the controller is stabilizing. This can be seen as a robust control problem, which we solve by solving an LMI as shown below.

Indeed, observe that the stability of $p(z;\lambda)$ is equivalent to the stability of  the associated companion matrix
$
	\F_\mathrm{c}(\lambda) := \F +\lambda \G \K,
$
with
\begin{align*}
	\F &= \begin{bmatrix}
		0 & 1 & 0 & \cdots \\
		& & \ddots & \\
		0 & \cdots & 0 & 1 \\
		-b_0 & \cdots & \cdots & -b_{m-1}
	\end{bmatrix},\quad
	\G= \begin{bmatrix}
		0\\
		\vdots\\
		0\\
		1
	\end{bmatrix}\\
\K&=\begin{bmatrix}c_0 & c_1 & \cdots & \cdots & c_{m-1}\end{bmatrix}
\end{align*}
Then the goal is to find $c_0,c_1,\ldots,c_{m-1}$  so that $\F_\mathrm{c}(\lambda)$  
is stable for all $\lambda \in [\lmin, \lmax]$. We can write $\lambda$ as the convex combination of the two extreme values by
$$
	\lambda = \alpha(\lambda)\lmin + (1 - \alpha(\lambda)) \lmax, \quad \alpha(\lambda) = \frac{\lmax - \lambda}{\lmax - \lmin}
$$
which allows us to use the following result \cite[Theorem~3]{de_oliveira_new_1999}.

\begin{lemma}[Stabilizing controller design]\label{lem:stabilizing-controller}
The matrix $\F_\mathrm{c}(\lambda)$ is asymptotically stable for all $\lambda \in [\lmin, \lmax]$ if there exist symmetric matrices $\underline\P, \overline\P\succ 0$, $\Q \in \R^{m \times m}$, and $\Rm \in \R^{1 \times m}$ such that the following two linear matrix inequalities are verified
$$
	\begin{bmatrix}
		\underline\P & \F \Q + \lmin\G \Rm \\
		\Q^\top \F^\top + \Rm^\top \lmin\G & \Q + \Q^\top -\underline\P
	\end{bmatrix} \succ 0, 
$$
$$
		\begin{bmatrix}
		\overline\P & \F \Q + \lmax\G \Rm \\
		\Q^\top \F^\top + \Rm^\top \lmax\G & \Q + \Q^\top -\overline\P
	\end{bmatrix} \succ 0.
$$
A stabilizing controller is then
$
	\K=\Rm \Q^{-1}.
$
\end{lemma}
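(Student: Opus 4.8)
The plan is to read this as a robust discrete-time stabilization problem and to build on the dilated (slack-variable) LMI characterization of Schur stability from the cited reference \cite{de_oliveira_new_1999}. The starting point is the elementary fact that a matrix $M$ is Schur stable (all eigenvalues strictly inside the unit disk) iff there is a symmetric $\P \succ 0$ solving a discrete Lyapunov inequality. The lemma I would reconstruct/invoke is the de Oliveira--Bernussou--Geromel characterization: $M$ is Schur stable iff there exist $\P \succ 0$ and a (generally non-symmetric) slack matrix $\Q$ with
$$
	\begin{bmatrix} \P & M\Q \\ \Q^\top M^\top & \Q + \Q^\top - \P \end{bmatrix} \succ 0 .
$$
The virtue of this form is that the system matrix $M$ and the Lyapunov certificate $\P$ sit in separate blocks, which is exactly what later permits both a change of controller variable and a parameter-dependent Lyapunov function.

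First I would establish the sufficiency direction actually needed. Positive definiteness forces the $(2,2)$-block $\Q + \Q^\top - \P \succ 0$, hence $\Q + \Q^\top \succ \P \succ 0$, so $\Q$ is nonsingular. Taking the Schur complement with respect to the $(1,1)$-block $\P$ and using the completion-of-squares bound $\Q^\top \P^{-1}\Q \succeq \Q + \Q^\top - \P$ (from $(\Q-\P)^\top \P^{-1}(\Q-\P)\succeq 0$), I obtain $\Q^\top(\P^{-1} - M^\top \P^{-1} M)\Q \succ 0$, and after congruence by $\Q^{-1}$ conclude $M^\top \P^{-1} M - \P^{-1} \prec 0$, i.e.\ $M$ is Schur stable with Lyapunov matrix $\P^{-1}$.

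Next comes the synthesis/robustness step. Setting $M = \F_\mathrm{c}(\lambda) = \F + \lambda \G \K$ and introducing the new variable $\Rm := \K \Q$ linearizes the otherwise bilinear product, since $M\Q = \F\Q + \lambda \G \Rm$ is then affine in both the decision variables and in $\lambda$; the controller is recovered as $\K = \Rm \Q^{-1}$, well defined by the nonsingularity of $\Q$. Because $\F\Q + \lambda \G \Rm$ is affine in $\lambda$ and $\lambda = \alpha\lmin + (1-\alpha)\lmax$ with $\alpha = \alpha(\lambda) \in [0,1]$, the block matrix at a generic $\lambda$ with Lyapunov block $\P(\alpha) := \alpha\underline\P + (1-\alpha)\overline\P \succ 0$ is precisely the convex combination, with weights $\alpha$ and $1-\alpha$, of the two LMIs in the statement. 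Convex combinations of positive definite matrices are positive definite, so this block matrix is $\succ 0$ for every $\lambda \in [\lmin, \lmax]$, and the sufficiency step yields Schur stability of $\F_\mathrm{c}(\lambda)$ with certificate $\P(\alpha)^{-1}$.

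The crux — and the main obstacle — is the dilated-LMI characterization itself: without the slack variable $\Q$, the Lyapunov matrix would multiply $M$, the change of variable $\Rm = \K\Q$ would not linearize the design, and one could only impose a single common Lyapunov function over the whole interval (much more conservative). The slack variable is what decouples $\P$ from $\F_\mathrm{c}(\lambda)$ and legitimizes the \emph{parameter-dependent} Lyapunov function $\P(\alpha)$, which is the feature that lets the two vertex LMIs certify robustness across the entire segment $[\lmin, \lmax]$. The remaining algebra (Schur complements, the quadratic bound, invertibility of $\Q$) is routine once this structure is in place.
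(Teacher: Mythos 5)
Your proposal is correct and matches the paper's route: the paper offers no proof of its own, simply invoking \cite[Theorem~3]{de_oliveira_new_1999} together with the convex-combination parametrization $\lambda = \alpha\lmin + (1-\alpha)\lmax$, and what you have written is precisely a sound reconstruction of that cited result's sufficiency direction (slack-variable nonsingularity, Schur complement, the bound $\Q^\top\P^{-1}\Q \succeq \Q + \Q^\top - \P$, the change of variable $\Rm = \K\Q$) plus its application with the parameter-dependent Lyapunov matrix $\P(\alpha) = \alpha\underline\P + (1-\alpha)\overline\P$. No gaps; this is exactly the argument the paper outsources to the reference.
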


\begin{remark}[Computing the controller]\label{rem:lmi-solution}
We remark that the LMIs' dimension depends on the degree $m$ of $B_\mathrm{D}(z)$, \emph{and not on the size of the problem $n$}.
We further remark that, depending on the model of $\bv_k$ and the bounds $\lmin$ and $\lmax$, there are cases in which the controller in Lemma~\ref{lem:stabilizing-controller} may not exist, that is, in which the LMIs are not feasible. While running the numerical experiments, we observed that, in some cases, the LMIs tend to be infeasible when the roots of $B_\mathrm{D}(z)$ are close to one and the condition number of $\A$ ($\lmax / \lmin$) is large.
Finally, while in the following we assume that a stabilizing controller does indeed exist and can be computed, if this is not the case we can fall back to the online gradient descent (cf. Remark~\ref{rem:online-gradient}).
\end{remark}

\subsection{Online algorithm}\label{subsec:online-algorithm}
We are finally ready to propose the online algorithm derived from the scheme of Figure~\ref{fig:block-diagram}.
Since $\X(z) = C(z) \E(z)$, and with the choice~\eqref{eq:controller-structure}, it is straightforward to see that the system defines the following online algorithm:
\begin{subequations}\label{eq:control-based-algorithm}
\begin{align}
	\w_{k+1} &= \left( \F \otimes \I \right) \w_k + (\G \otimes\I) \nabla f_k(\x_k) \label{eq:control-based-algorithm-w} \\
	\x_{k+1} &= \left( \K \otimes \I \right) \w_{k+1}
\end{align}
\end{subequations}
where, as defined above $\nabla f_k(\x_k) = \e_k = \A \x_k + \bv_k$, and $\w \in \R^{n m}$ is a set of $m$ auxiliary vectors, representing the state of the closed loop canonical control form realization.
Notice that the updates of algorithm~\eqref{eq:control-based-algorithm} are fully defined with knowledge of \textit{(i)} the internal model $B_\mathrm{D}(z)$, and \textit{(ii)} the bounds $\lmin$, $\lmax$, which, as discussed above, is a common assumption in (online) optimization. Indeed, of the model $\mathcal{Z}[\bv_k]$ we need only to know the poles, while no information on the numerator $\B_\mathrm{N}(z)$ is required.

\subsubsection*{Non-quadratic problems}
We remark that algorithm~\eqref{eq:control-based-algorithm} can be \textit{applied in a straightforward manner also to the more general (not necessarily quadratic) problem~\eqref{eq:generic-online-optimization}}, since its updates receive as input the gradient $\nabla f_k(\x_k)$. 
But, while for more general costs we can still access the bounds $\lmin$, $\lmax$\footnote{Which represent the strong convexity and smoothness moduli of $f_k$.}, we no longer have access to the internal model $B_\mathrm{D}(z)$. Instead, \textit{which internal model is used becomes a design parameter of the algorithm}. Tuning the internal model can be done by leveraging some information on the variability of the cost function (as discussed in Example~\ref{ex:periodic}), or it may be estimated from historical data.
Regardless of how the internal model is derived, when deriving it we need to keep in mind that smaller models (having lower degree $m$) are generally better. Indeed, larger models may lead to infeasibility of the controller LMIs (cf. Remark~\ref{rem:lmi-solution}), or much longer transients (cf. sections~\ref{subsec:numerical-tv-quadratic}~and~\ref{subsec:numerical-non-quadratic}).

\begin{example}[Periodic $f_k$]\label{ex:periodic}
Consider a problem~\eqref{eq:generic-online-optimization} in which the cost function is periodic with period $P$. In this case a reasonable choice of internal model is the transfer function
\begin{equation}\label{eq:approximate-internal-model}
	B_\mathrm{D}(z) = (z - 1) \prod_{\ell = 1}^L (z^2 - 2\cos(\ell \theta) z + 1)
\end{equation}
whose poles are multiples of the frequency $\theta = (2\pi / P) \Ts$\footnote{Recall that $\Ts$ is the ``sampling time'', \textit{i.e.} the time that elapses between the arrival of two consecutive cost functions.}.
\end{example}

\section{Convergence Analysis}\label{sec:convergence}
As discussed above, the proposed algorithm~\eqref{eq:control-based-algorithm} can be applied both to the quadratic problem, for which ~\eqref{eq:online-quadratic} holds, as well as to the more general problem~\eqref{eq:generic-online-optimization}.
The following sections then analyze the convergence of the algorithm in both scenarios, providing bounds to the tracking error $\norm{\x_k -\x_k^*}$ and discussing their implications.

\subsection{Convergence for quadratic problem~\eqref{eq:online-quadratic}}\label{subsec:convergence-A-ti}

\begin{proposition}[Convergence for~\eqref{eq:online-quadratic}]\label{pr:convergence-control-algorithm}
Assume that $f_k$ is quadratic as in ~\eqref{eq:online-quadratic} and that Assumptions~\ref{as:generic-online-problem}~and~\ref{as:model-b} hold. 
Assume that the controller $C(z) = \frac{C_\mathrm{N}(z)}{C_\mathrm{D}(z)}$ is such that
\begin{enumerate}
	\item[(c1)] \textit{internal model}: $C_\mathrm{D}(z)$ includes all poles of $B_\mathrm{D}(z)$,
	
	\item[(c2)] \textit{stability}: $B_\mathrm{D}(z) - C_\mathrm{N}(z) \lambda$ is stable for all $\lambda \in [\lmin, \lmax]$.
\end{enumerate}
Then the output $\{ \x_k \}_{k \in \N}$ of the online algorithm~\eqref{eq:control-based-algorithm} is such that
$$
	\limsup_{k \to \infty} \norm{\x_k - \x_k^*} = 0.
$$
\end{proposition}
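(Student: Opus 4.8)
The plan is to reduce the claim to showing that the gradient signal $\e_k := \nabla f_k(\x_k) = \A\x_k + \bv_k$ vanishes asymptotically, and then to control $\e_k$ through its $\mathcal{Z}$-transform. First I would note that, since $\A \succ 0$ by Assumption~\ref{as:generic-online-problem}, the unique minimizer of~\eqref{eq:online-quadratic} is $\x_k^* = -\A^{-1}\bv_k$, so that $\A\x_k^* + \bv_k = \0$ and therefore $\e_k = \A(\x_k - \x_k^*)$. Strong convexity then yields $\norm{\x_k - \x_k^*} \le \norm{\A^{-1}}\,\norm{\e_k} \le \lmin^{-1}\norm{\e_k}$, so it suffices to prove that $\e_k \to \0$; the desired conclusion $\limsup_{k\to\infty}\norm{\x_k - \x_k^*} = 0$ is then immediate.

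Next I would carry out the closed-loop $\mathcal{Z}$-transform analysis already begun in~\eqref{eq:z-transform-e}. Solving $\X(z) = C(z)\E(z)$ together with $\E(z) = \A\X(z) + \B(z)$ gives $\E(z) = (\I - \A C(z))^{-1}\B(z)$, and substituting $C(z) = C_\mathrm{N}(z)/C_\mathrm{D}(z)$ and $\B(z) = \B_\mathrm{N}(z)/B_\mathrm{D}(z)$ produces $\E(z) = \big(C_\mathrm{D}(z)/B_\mathrm{D}(z)\big)\,(C_\mathrm{D}(z)\I - C_\mathrm{N}(z)\A)^{-1}\,\B_\mathrm{N}(z)$. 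The decisive step is the internal-model condition (c1): because $B_\mathrm{D}(z)$ divides $C_\mathrm{D}(z)$, the prefactor $C_\mathrm{D}(z)/B_\mathrm{D}(z)$ is a polynomial, so the (possibly marginally stable) modes of the disturbance $\bv_k$ cancel exactly and the only poles left in $\E(z)$ are the roots of $\det(C_\mathrm{D}(z)\I - C_\mathrm{N}(z)\A)$. For the controller realized by algorithm~\eqref{eq:control-based-algorithm}, where $C_\mathrm{D}(z) = B_\mathrm{D}(z)$, the eigendecomposition~\eqref{eq:z-transform-e-eigendecomposition} factors this determinant as $\prod_{i=1}^n\big(B_\mathrm{D}(z) - C_\mathrm{N}(z)\lambda_i\big)$, so the closed-loop poles are exactly the roots of the polynomials $p(z;\lambda_i)$.

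The conclusion then follows from the stability condition (c2). Since each $B_\mathrm{D}(z) - C_\mathrm{N}(z)\lambda_i$ is stable for $\lambda_i \in [\lmin, \lmax]$, every pole of the (proper) rational vector $\E(z)$ lies strictly inside the unit circle, and an inverse-transform/partial-fraction argument shows that $\e_k$ decays geometrically to $\0$. To make this rigorous for the actual trajectory of~\eqref{eq:control-based-algorithm} rather than for the zero-initial-condition response, I would observe that a nonzero initial state $\w_0$ contributes only a free response whose modes are the eigenvalues of the closed-loop state matrix; this matrix decouples under the eigendecomposition of $\A$ into the blocks $\F_\mathrm{c}(\lambda_i) = \F + \lambda_i \G\K$, whose characteristic polynomials are again the stable $p(z;\lambda_i)$, so the transient decays and does not affect the limit.

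The step I expect to be the crux is the pole/zero cancellation invoked at (c1). The modes of $\bv_k$ lie on or inside the unit circle (Assumption~\ref{as:model-b}), so removing them is a marginal cancellation that is legitimate only because, in the nominal setting of this proposition, the internal model $B_\mathrm{D}(z)$ reproduces the true disturbance dynamics exactly; this is the mechanism that lets a persistent, non-decaying $\bv_k$ nevertheless drive $\e_k$ to zero. I would be careful to argue that the cancellation eliminates the factor $B_\mathrm{D}(z)$ from the denominator outright---rather than relying on a fortuitous root of $\B_\mathrm{N}(z)$---so that no cancelled mode is silently reintroduced, and I would flag that the exactness exploited here is precisely what the subsequent sensitivity analysis is designed to relax.
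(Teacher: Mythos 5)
Your proof is correct and takes essentially the same route as the paper's: the closed-loop identity $\E(z) = (B_\mathrm{D}(z)\I - C_\mathrm{N}(z)\A)^{-1}\B_\mathrm{N}(z)$, the eigendecomposition of $\A$ reducing stability to the scalar polynomials $p(z;\lambda_i) = B_\mathrm{D}(z) - C_\mathrm{N}(z)\lambda_i$, and condition (c2) placing all poles of $\E(z)$ strictly inside the unit circle so that $\e_k \to \0$, which yields the claim via $\x_k - \x_k^* = \A^{-1}\e_k$. The paper's proof is a one-line appeal to exactly these facts (derived in Section~\ref{sec:control-based-algorithm}); your additional care about the structural cancellation under (c1) and about the free response generated by the initial state $\w_0$ (whose modes are again the roots of the stable $p(z;\lambda_i)$) only makes the same argument more explicit.
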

\begin{proof}
By choosing a controller that stabilizes the matrix $\F_{\mathrm{c}}(\lambda)$, $\lambda \in [\lmin, \lmax]$, the poles of $\E(z)$ (cf.~\eqref{eq:z-transform-e-eigendecomposition}) are asymptotically stable, and the gradient $\e_k$ converges to zero, implying the thesis.
\end{proof}

\subsubsection*{Convex problems}
So far we have dealt, according to Assumption~\ref{as:generic-online-problem}, with strongly convex problems. In the following we show that the results derived in this section can be leveraged to prove convergence of~\eqref{eq:control-based-algorithm} for \emph{convex problems}. In particular, we are able to show that the output of~\eqref{eq:control-based-algorithm} converges linearly to the sub-space of solutions.

\begin{proposition}[Convergence for convex~\eqref{eq:online-quadratic}]
In the set-up of Proposition~\ref{pr:convergence-control-algorithm} assume that $\A = \A^\top \succeq 0$ with $\operatorname{rank}(\A) = r < n$, and assume that the non-zero eigenvalues lie in $[\lmin, \lmax]$. Assume that $\bv_k \in \operatorname{range}(\A)$ for all $k$ and let $\X_k^*$ be the (affine) set of solutions to~\eqref{eq:online-quadratic}. Assume moreover that the controller $C(z) = \frac{C_\mathrm{N}(z)}{C_\mathrm{D}(z)}$ verifies (c1) and (c2) of Proposition~\ref{pr:convergence-control-algorithm}.
Then the output $\{ \x_k \}_{k \in \N}$ of the online algorithm~\eqref{eq:control-based-algorithm} is such that
$$
	\limsup_{k \to \infty} \ \dist(\x_k, \X_k^*) = 0.
$$
\end{proposition}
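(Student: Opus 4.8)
The plan is to reduce the rank-deficient (convex) case to the full-rank (strongly convex) case already settled in Proposition~\ref{pr:convergence-control-algorithm} by decomposing the space $\R^n$ into $\operatorname{range}(\A)$ and $\ker(\A)$. First I would invoke the spectral decomposition $\A = \V \Lm \V^\top$ and split $\V = [\V_r \ \V_0]$, where the columns of $\V_r$ span $\operatorname{range}(\A)$ (the eigenvectors with nonzero eigenvalues $\lambda_i \in [\lmin, \lmax]$) and those of $\V_0$ span $\ker(\A)$. Because the algorithm~\eqref{eq:control-based-algorithm} acts coordinate-wise through $\A$ after the change of basis (recall the diagonalization~\eqref{eq:z-transform-e-eigendecomposition}, where the poles of $\E(z)$ are the roots of the $n$ decoupled polynomials $B_\mathrm{D}(z) - C_\mathrm{N}(z)\lambda_i$), the dynamics separate into a component living in $\operatorname{range}(\A)$ and a component living in $\ker(\A)$.

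Next I would analyze the two components separately. On the range component, each nonzero eigenvalue $\lambda_i$ lies in $[\lmin,\lmax]$, so condition (c2) guarantees that $B_\mathrm{D}(z) - C_\mathrm{N}(z)\lambda_i$ is stable; hence the range-projection of the error, $\V_r^\top \e_k$, decays to zero exactly as in Proposition~\ref{pr:convergence-control-algorithm}. Since $\bv_k \in \operatorname{range}(\A)$ by hypothesis, the gradient $\e_k = \A\x_k + \bv_k$ also lives in $\operatorname{range}(\A)$, so $\e_k$ has no component along $\ker(\A)$ to begin with: $\V_0^\top \e_k = \V_0^\top \bv_k = 0$. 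Driving $\V_r^\top \e_k \to 0$ therefore drives the \emph{entire} gradient to zero. The key observation is that $\e_k \to 0$ means $\A\x_k + \bv_k \to 0$, which is precisely the optimality (first-order stationarity) condition defining $\X_k^*$ for the convex quadratic, so $\x_k$ approaches the solution set.

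The main obstacle will be the kernel direction, where $\lambda = 0$ and the polynomial $p(z;0) = B_\mathrm{D}(z)$ has poles that are only \emph{marginally} stable by Assumption~\ref{as:model-b}, so the corresponding component of $\w_k$ (and hence of $\x_k$) need not converge to a point. This is exactly why the statement measures $\dist(\x_k,\X_k^*)$ rather than $\norm{\x_k - \x_k^*}$: the output may drift within the affine solution set $\X_k^*$ without ever settling, but it still approaches the set. To make this rigorous I would argue that the distance $\dist(\x_k,\X_k^*)$ depends only on the range-component of $\x_k$ relative to $\X_k^*$ — projecting out $\ker(\A)$, which is the direction along which $\X_k^*$ itself extends — and that this range-component is governed entirely by the stable polynomials $B_\mathrm{D}(z) - C_\mathrm{N}(z)\lambda_i$ with $\lambda_i > 0$. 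Concretely, writing $\x_k = \V_r \y_k + \V_0 \z_k$, the set $\X_k^* = \{\x : \A\x = -\bv_k\}$ fixes $\y_k$ asymptotically (via $\V_r^\top\e_k \to 0$) while leaving $\z_k$ free; since $\X_k^*$ is a translate of $\ker(\A) = \operatorname{range}(\V_0)$, the unconstrained drift in $\z_k$ is tangent to $\X_k^*$ and contributes nothing to the distance. The linear rate then follows from the uniform stability margin of the finitely many polynomials over the compact interval $[\lmin,\lmax]$, giving $\limsup_{k\to\infty}\dist(\x_k,\X_k^*) = 0$.
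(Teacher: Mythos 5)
Your proof is correct and follows essentially the same route as the paper's: both diagonalize $\A$ into range/kernel components, observe that $\dist(\x_k,\X_k^*)$ is governed only by the range-space part of the error (the paper phrases this via the identity $\x_k - \proj_{\X_k^*}(\x_k) = \A^+ \e_k$, you via the tangency of the kernel drift to $\X_k^*$), and conclude from stability of the polynomials $B_\mathrm{D}(z) - C_\mathrm{N}(z)\lambda_i$ associated with the nonzero eigenvalues. The only cosmetic difference is that you additionally note $\V_0^\top \e_k \equiv 0$ because $\bv_k \in \operatorname{range}(\A)$, whereas the paper lets the pseudo-inverse $\A^+$ annihilate the kernel component automatically.
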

\begin{proof}
Consider the eigendecomposition of the (now positive semi-definite) matrix $\A$, $\A = \V \Lm \V^\top$, where we let $\V = \begin{bmatrix} \U & \W \end{bmatrix}$, $\Lm = \diag\left\{ \lambda_1, \ldots, \lambda_r, 0, \ldots, 0 \right\}$, $\U \in \R^{n \times r}$ having as columns the eigenvectors of the non-zero eigenvalues. With this notation, we know that the affine set of solution is defined by
$
	\X_k^* = \left\{ - \A^+ \bv_k + (\I - \A^+ \A) \w \ | \ \w \in \R^n \right\}
$
where $\A^+$ is the pseudo-inverse
\begin{equation}\label{eq:pseudo-inverse-A}
	\A^+ = \V \diag\left\{ \lambda_1^{-1}, \ldots, \lambda_r^{-1}, 0, \ldots, 0 \right\} \V^\top.
\end{equation}

The projection of $\x_k$ onto $\X_k^*$ is defined as $\proj_{\X_k^*}(\x_k) := (\I - \A^+ \A) \x_k - \A^+ \bv_k$ \cite[section~6.2.2]{parikh_proximal_2014}, and our goal is to prove that, asymptotically, $\x_k - \proj_{\X_k^*}(\x_k) \to 0$. Indeed, since $\proj_{\X_k^*}(\x_k)$ is the element of $\X_k^*$ closest (in Euclidean norm) to $\x_k$, this implies that the distance of $\x_k$ from $\X_k^*$ converges to zero asymptotically.

By the definitions above, it is straightforward to see that
$$
	\x_k - \proj_{\X_k^*}(\x_k) = \A^+ \e_k
$$
and if we prove that $\A^+ \e_k \to 0$ as $k \to \infty$ then the thesis is proved. By~\eqref{eq:z-transform-e-eigendecomposition} we know that
$$
	\E(z) = \V \left( B_\mathrm{D}(z) \I - C_\mathrm{N}(z) \Lm  \right)^{-1} \V^\top \B_\mathrm{N}(z)
$$
where in the convex case the matrix $B_\mathrm{D}(z) \I - C_\mathrm{N}(z) \Lm$ has $n - r$ eigenvalues equal to $B_\mathrm{D}(z)$, and the remaining $n-r$ are $\left\{ B_\mathrm{D}(z) - C_\mathrm{N}(z) \lambda_i \right\}_{i = 1}^r$, $\lambda_i \in [\lmin, \lmax]$. Moreover, using~\eqref{eq:pseudo-inverse-A} we can see that
$$
	\A^+ \E(z) = \U \Lm_{>0}^{-1} \left( B_\mathrm{D}(z) \I - C_\mathrm{N} \Lm_{>0}^{-1} \right)^{-1} \U^\top \B_\mathrm{N}(z)
$$
where $\Lm_{>0}^{-1} = \diag\left\{ \lambda_1, \ldots, \lambda_r \right\}$.
Finally, if we choose a stabilizing controller then the poles of $\A^+ \E(z)$ are stable, and the thesis is proved.
\end{proof}

\subsubsection*{Piece-wise model variation}
An interesting observation is that the proposed algorithm can also be applied when $\bv_k$ obeys the model only in a piece-wise fashion.
Precisely, in the current set-up one can see $\bv_k$ as the output of the transfer matrix $\diag\{ \B(z) \}$ driven by the impulsive input $\uv_k = \av \delta(k)$, $\av \in \R^n$. More generally we can also drive the same transfer matrix with input $\uv_k = \sum_{i \in \N} \av_i \delta(k - k_i)$, $\av_i \in \R^n$, which results in a different model $\B_\mathrm{N}^i(z) / B_\mathrm{D}(z)$ when each new impulse acts.
For example, taking $B(z)=(z-1)^2$ we can model in this fashion a signal $\bv_k$ characterized by a sequence of ramp segments, each with a different slope.
When applied to this class of problems, the tracking error of the proposed algorithm does not converge asymptotically to zero. Rather, at every change in the model (at the times $k_i$) the algorithm undergoes a new transient, and then establishes convergence towards zero -- until a new change in the problem occurs.

\subsection{Convergence for the general problem~\eqref{eq:generic-online-optimization}}\label{subsec:convergence-generic}
So far, we have leveraged the particular class of quadratic problems~\eqref{eq:online-quadratic} to inspire the design of the proposed online algorithm, and proved in the previous section that for these problems the algorithm achieves perfect tracking of $\x_k^*$. Now we turn our attention to the more general problem~\eqref{eq:generic-online-optimization}
where we assume the costs $f_k$ satisfy the following assumption:

\begin{assumption}[General problem]\label{as:extensions}
The cost functions $\{ f_k \}_{k \in \N}$ can be decomposed as
\begin{equation}\label{eq:general-cost}
	f_k(\x) = \frac{1}{2} \x^\top \A \x +  \bv_k^\top \x  + \varphi_k(\x)
\end{equation}
where:
\begin{enumerate}
	\item[(i)] $\A $ satisfies Assumption~\ref{as:generic-online-problem};
	
	\item[(ii)] $\{ \bv_k \}_{k \in \N}$ satisfies Assumption~\ref{as:model-b} and additionally $\norm{\bv_k} \leq \beta$ for all $k \in \N$;
	
	\item[(iii)] $\varphi_k(\x) = \varphi_k'(\x) + \varphi_k''(\x)$ with
	$$
		\norm{\nabla \varphi_k'(\x)} \leq \delta \quad \text{and} \quad \norm{\nabla \varphi_k''(\x)} \leq \gamma \norm{\x}, \quad \forall \x \in \R^n.
	$$
\end{enumerate}
\end{assumption}

\vspace{0.35cm}

Before proving the main convergence result, let us comment on the choice of the cost~\eqref{eq:general-cost}. We can write the cost at hand as
$
	f_k(\x) = \hat{f}_k(\x) + \varphi_k(\x)
$
where $\hat{f}_k(\x) = \frac{1}{2} \x^\top \A \x + \bv_k^\top \x$. We can then interpret~\eqref{eq:general-cost} as a \textit{perturbation of the quadratic $\hat{f}_k$}, where the perturbation has both a term whose gradient is bounded and another term whose gradient has bounded gain.
Consider now the scheme of Figure~\ref{fig:block-diagram} in which we replace $\nabla f_k(\cdot)$ with the perturbed gradient $\nabla \hat{f}_k(\cdot) + \nabla \varphi_k(\cdot)$. Clearly, if $\nabla \varphi_k(\x) = \0$ Proposition~\ref{pr:convergence-control-algorithm} applies\footnote{Indeed, by Assumption~\ref{as:extensions}(i) the cost $\hat{f}_k$ is strongly convex.} and we can control the tracking error to zero. We can then think of $\dv_k := \nabla \varphi_k(\x_k)$ as a disturbance, as depicted in Figure~\ref{fig:block-diagram-extensions}.
\begin{figure}[!ht]
\centering
\includegraphics[scale=0.8]{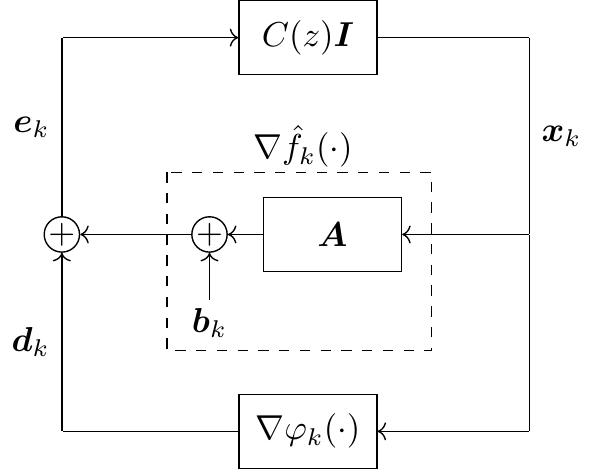}
\caption{The control scheme Figure~\ref{fig:block-diagram} applied to the general cost~\eqref{eq:general-cost}, highlighting the disturbance interpretation.}
\label{fig:block-diagram-extensions}
\end{figure}

\smallskip

The following section~\ref{ssubbsec:main-result} provides a convergence result for all non-quadratic costs that can be written as the ``perturbed'' quadratic~\eqref{eq:general-cost}, of which quadratic costs with time-varying Hessian are an example (see section~\ref{ssubsec:convergence-A-tv}). In section~\ref{ssubsec:inexact-model} we further analyze the convergence for quadratic problems when only an inexact knowledge of the internal model is available.

\subsubsection{Main convergence result}\label{ssubbsec:main-result}
The following result is a consequence of the \textit{small gain theorem}, which allows us to prove that the disturbance $\dv_k$ leads to a bounded (but, in general, non-zero) tracking error for the proposed algorithm~\eqref{eq:control-based-algorithm}. The proof is given in the appendix.

\begin{proposition}[Convergence for~\eqref{eq:generic-online-optimization}]\label{pr:generic-online-optimization}
Consider problem~\eqref{eq:generic-online-optimization} for which Assumption~\ref{as:extensions} holds. 
Assume that the controller $C(z) = \frac{C_\mathrm{N}(z)}{C_\mathrm{D}(z)}$ such that
\begin{enumerate}
	\item[(c1)] \textit{internal model}: $C_\mathrm{D}(z)$ includes all poles of $B_\mathrm{D}(z)$,
	
	\item[(c2)] \textit{stability}: $B_\mathrm{D}(z) - C_\mathrm{N}(z) \lambda$ is stable for all $\lambda \in [\lmin, \lmax]$,
	
	\item[(c3)] \textit{small gain}: $\norm{C(z) (\I - C(z) \A)^{-1}}_\infty < \frac{1}{\gamma}$.
\end{enumerate}
Then the output $\{ \x_k \}_{k \in \N}$ of the online algorithm~\eqref{eq:control-based-algorithm} is such that
\begin{align}
	&\limsup_{k \to \infty} \norm{\x_k - \x_k^*} \leq \nonumber \\ &\quad \norm{(\I - C(z) \A)^{-1}}_\infty \frac{\delta + \beta \gamma \norm{C(z) (\I - C(z) \A)^{-1}}_\infty}{1 - \gamma \norm{C(z) (\I - C(z) \A)^{-1}}_\infty}. \label{eq:asymptotic-error-general}
\end{align}
\smallskip
\end{proposition}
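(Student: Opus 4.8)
The plan is to treat $\dv_k := \nabla \varphi_k(\x_k)$ as an exogenous disturbance injected into the loop of Figure~\ref{fig:block-diagram-extensions} and to quantify its effect through the small gain theorem. First I would redo the transform-domain computation of Section~\ref{subsec:control-scheme} with this extra input. Since $C(z)$ acts as the scalar multiple $C(z)\I$ and therefore commutes with $\A$, the manipulation that produced~\eqref{eq:z-transform-e} now yields, with $\D(z) = \mathcal{Z}[\dv_k]$,
\[
	\E(z) = \left(\I - C(z)\A\right)^{-1}\left(\B(z) + \D(z)\right), \qquad \X(z) = C(z)\left(\I - C(z)\A\right)^{-1}\left(\B(z) + \D(z)\right).
\]
Conditions (c1)--(c2) guarantee, exactly as in Proposition~\ref{pr:convergence-control-algorithm}, that the LTI maps $(\I - C(z)\A)^{-1}$ and $H(z) := C(z)(\I - C(z)\A)^{-1}$ are BIBO stable and that the internal model cancels the poles of $\B(z)$; hence the $\bv$-driven part of $\e_k$ is a decaying transient and only the disturbance-driven part survives asymptotically.

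Next I would close the loop. By Assumption~\ref{as:extensions}(iii) the disturbance obeys the pointwise bound $\norm{\dv_k} \leq \norm{\nabla \varphi_k'(\x_k)} + \norm{\nabla \varphi_k''(\x_k)} \leq \delta + \gamma\norm{\x_k}$, so it is the sum of a bounded exogenous term (magnitude $\delta$) and a term of incremental gain $\gamma$ in $\x_k$. Using that the $\ell_2$-induced gain of a stable LTI system equals its $\norm{\cdot}_\infty$ norm, together with $\norm{\bv_k} \leq \beta$, I would bound the asymptotic state size $X := \limsup_k \norm{\x_k}$ from the relation $\X = H(\B + \D)$ as $X \leq \norm{H(z)}_\infty(\beta + D)$, where $D := \limsup_k \norm{\dv_k}$. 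Substituting the disturbance bound gives the fixed-point inequality $D \leq \delta + \gamma\norm{H(z)}_\infty(\beta + D)$.

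The small gain condition (c3), $\gamma\norm{H(z)}_\infty < 1$, makes $1 - \gamma\norm{H(z)}_\infty$ positive, so I can solve the inequality for the asymptotic disturbance level,
\[
	D \leq \frac{\delta + \beta\gamma\norm{H(z)}_\infty}{1 - \gamma\norm{H(z)}_\infty},
\]
which is precisely the fraction in~\eqref{eq:asymptotic-error-general}. It then remains to propagate this to the tracking error. Since the $\bv$-component has already been cancelled by the internal model, asymptotically $\e_k$ is the response of $(\I - C(z)\A)^{-1}$ to $\dv_k$, so $\limsup_k \norm{\e_k} \leq \norm{(\I - C(z)\A)^{-1}}_\infty\, D$; converting the residual gradient into the tracking error via strong convexity and the optimality condition $\nabla f_k(\x_k^*) = 0$ then introduces the prefactor $\norm{(\I - C(z)\A)^{-1}}_\infty$ and delivers the claimed bound.

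I expect the main obstacle to be twofold. The first delicate point is making the frequency-domain machinery ($\norm{\cdot}_\infty$ norms and the small gain theorem) interact rigorously with the $\limsup$ of \emph{pointwise} Euclidean norms that appears in the statement: one must justify that for these finite-frequency, bounded inputs the $\norm{\cdot}_\infty$ gains do bound the asymptotic signal amplitudes and that the decaying internal-model transient contributes nothing in the limit. The second, and more subtle, issue is the final conversion from the gradient residual $\e_k$ to $\x_k - \x_k^*$: because $\x_k^*$ is the minimizer of the \emph{full} perturbed cost, its dependence on $\varphi_k$ must be carried through the optimality condition, and one must verify that the disturbance feedback is well posed and does not destroy the strong convexity that guarantees $\x_k^*$ is well defined.
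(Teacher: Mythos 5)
Your proposal follows essentially the same route as the paper's own proof: the same decomposition $\E(z) = (\I - C(z)\A)^{-1}\left(\B(z) + \D(z)\right)$ with the $\bv$-driven part eliminated by (c1)--(c2), the same disturbance bound $\norm{\dv_k} \leq \delta + \gamma \norm{\x_k}$, the same small-gain fixed-point inequality in the asymptotic size of $\x_k$, and the same rearrangement producing the fraction in~\eqref{eq:asymptotic-error-general}. Even the step you flag as delicate --- converting the residual gradient $\e_k$ into the tracking error $\norm{\x_k - \x_k^*}$ --- is treated no more explicitly in the paper, which directly identifies the asymptotic bound on the disturbance-driven component of $\e_k$ with the claimed tracking-error bound.
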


\smallskip

The result we derived depends on the $\infty$-norm of the two transfer matrices $(\I - C(z) \A)^{-1}$ and $C(z) (\I - C(z) \A)^{-1}$. The following lemma provides bounds to these norms that are useful in practice when numerically designing the controller.

\begin{lemma}\label{lem:bounds-infinity-norm}
We have the following bounds:
\begin{align*}
	\norm{(\I - C(z) \A)^{-1}}_\infty &\leq \max_{\lambda \in [\lmin, \lmax]} \norm{(1 - \lmax C(z))^{-1}}_\infty, \\
	\norm{C(z) (\I - C(z) \A)^{-1}}_\infty &\leq \max_{\lambda \in [\lmin, \lmax]} \norm{C(z) (1 - \lmax C(z))^{-1}}_\infty.
\end{align*}
\end{lemma}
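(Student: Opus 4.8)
The plan is to exploit the fact that $C(z)$ is a \emph{scalar} transfer function while $\A$ is symmetric, so that both transfer matrices appearing in the bounds are diagonalized, simultaneously at every frequency, by the single constant orthogonal matrix $\V$ from the eigendecomposition $\A = \V \Lm \V^\top$ (Assumption~\ref{as:generic-online-problem}, $\V^\top \V = \I$). This reduces each matrix $\infty$-norm to a maximum, over the eigenvalues $\lambda_i$ of $\A$, of the corresponding scalar transfer-function $\infty$-norms, which are then dominated by taking the maximum over the whole interval $[\lmin, \lmax]$.

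First I would write $\I - C(z) \A = \V (\I - C(z) \Lm) \V^\top$, which is valid precisely because $C(z)$ is scalar and hence commutes with $\V$; since $\I - C(z)\Lm = \diag\{ 1 - C(z) \lambda_i \}_{i=1}^n$, inverting gives $(\I - C(z)\A)^{-1} = \V \diag\{ (1 - C(z)\lambda_i)^{-1} \} \V^\top$. Evaluating at $z = e^{j\theta}$ and using that $\V$ is real orthogonal, hence unitary over $\C$, so that left- and right-multiplication by $\V$ leaves the singular values unchanged, I obtain
\[
	\sigma_{\max}\bigl( (\I - C(e^{j\theta})\A)^{-1} \bigr) = \max_{i} \frac{1}{|1 - C(e^{j\theta})\lambda_i|}.
\]

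The next step is to take the maximum over $\theta \in [0, 2\pi]$ and swap it with the finite maximum over the index $i$, yielding $\norm{(\I - C(z)\A)^{-1}}_\infty = \max_i \norm{(1 - \lambda_i C(z))^{-1}}_\infty$. Since each $\lambda_i \in [\lmin, \lmax]$, the maximum over the finite set $\{\lambda_i\}$ is upper bounded by the maximum of the same scalar $\infty$-norm over the entire interval, which is the first claimed inequality. The second bound follows verbatim from the same diagonalization: $C(z)(\I - C(z)\A)^{-1} = \V \diag\{ C(z)(1 - C(z)\lambda_i)^{-1} \}\V^\top$, so unitary invariance and max-swapping give $\max_i \norm{C(z)(1 - \lambda_i C(z))^{-1}}_\infty$, again dominated by the continuous maximum over $[\lmin, \lmax]$.

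There is no deep obstacle here; the only points requiring care are that the unitary invariance of the singular values must be invoked frequency by frequency, pairing the complex scalar $C(e^{j\theta})$ with the real orthogonal (hence unitary) $\V$, and that the passage from the discrete set $\{\lambda_i\}$ to the full interval $[\lmin,\lmax]$ is exactly what turns the intermediate equality into the stated $\leq$ bound. I read the right-hand sides with $\lambda$ (ranging over the $\max$) rather than $\lmax$ inside the scalar transfer functions, since otherwise the $\max_{\lambda}$ would be vacuous and the inequality need not hold.
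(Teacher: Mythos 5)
Your proof is correct and takes essentially the same route as the paper's: eigendecomposition $\A = \V \Lm \V^\top$, unitary invariance reducing the matrix $\infty$-norm to the scalar transfer functions $(1 - \lambda_i C(z))^{-1}$ in the eigenvalues, swapping the maxima over frequency and eigenvalue index, and then passing from the discrete spectrum to the whole interval $[\lmin, \lmax]$. Your reading of the right-hand sides with $\lambda$ in place of $\lmax$ is also the intended one: the statement as printed contains a typo, since the paper's own proof bounds the norm by $\max_{\theta} \max_{\lambda \in [\lmin, \lmax]} |1 - C(e^{i\theta})\lambda|^{-1}$, which would be vacuous otherwise.
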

\begin{proof}
Consider the eigendecomposition $\A = \V \Lm \V^\top$, then by definition we have
\begin{align*}
	\norm{(\I - C(z) \A)^{-1}}_\infty &= \norm{\V (\I - C(z) \Lm)^{-1} \V^\top}_\infty \\
	&\leq \norm{(\I - C(z) \Lm)^{-1}}_\infty \\
	&= \max_{\theta \in [0, 2\pi]} \sigma_{\max}\left( (\I - C(e^{i \theta}) \Lm)^{-1} \right) \\
	&\leq \max_{\theta \in [0, 2\pi]} \max_{\lambda \in [\lmin, \lmax]} |1 - C(e^{i \theta}) \lambda|^{-1}
\end{align*}
where we used Assumption~\ref{as:extensions}(i). The thesis follows by swapping the maxima (which we can do since they are defined on compacts). The second bound follows using the same arguments.
\end{proof}

\subsubsection{Time-varying $\A_k$}\label{ssubsec:convergence-A-tv}
We consider now the following quadratic, online optimization problem
\begin{equation}\label{eq:online-quadratic-A-tv}
	\x_k^* = \argmin_{\x \in \R^n} f_k(\x) := \frac{1}{2} \x^\top \A_k \x +  \bv_k^\top \x 
\end{equation}
in which, differently from~\eqref{eq:online-quadratic}, also the quadratic term is time-varying.
We prove the following convergence result in which the Hessian $\A_k$ is modeled as the sum of a static matrix and a time-varying perturbation.

\begin{corollary}[Time-varying $\A_k$]\label{cor:time-varying-A}
Consider problem~\eqref{eq:online-quadratic-A-tv} for which Assumption~\ref{as:generic-online-problem}, Assumption~\ref{as:extensions}~(i) and Assumption~\ref{as:extensions}~(ii) hold.
Further assume that we can write $\A_k = \A + \tilde{\A}_k$ with $\lmin \I \preceq \A \preceq \lmax \I$ and $\norm{\tilde{\A}_k} \leq \gamma$ for all $k$.
Assume that the controller $C(z) = \frac{C_\mathrm{N}(z)}{C_\mathrm{D}(z)}$ satisfies (c1),~(c2) of Proposition~\ref{pr:generic-online-optimization}.
Then the output $\{ \x_k \}_{k \in \N}$ of the online algorithm~\eqref{eq:control-based-algorithm} satisfies the bound~\eqref{eq:asymptotic-error-general} with $\delta = 0$.
\end{corollary}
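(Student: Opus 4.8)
The plan is to recognize Corollary~\ref{cor:time-varying-A} as a direct instance of Proposition~\ref{pr:generic-online-optimization}: I would show that the time-varying-Hessian cost in~\eqref{eq:online-quadratic-A-tv} can be cast in the perturbed form~\eqref{eq:general-cost} required by Assumption~\ref{as:extensions}, and then simply invoke the proposition. Substituting $\A_k = \A + \tilde{\A}_k$ gives
\[
	f_k(\x) = \frac{1}{2} \x^\top \A \x + \bv_k^\top \x + \frac{1}{2} \x^\top \tilde{\A}_k \x,
\]
so I would identify the quadratic part $\hat{f}_k(\x) = \frac{1}{2}\x^\top\A\x + \bv_k^\top\x$ -- which satisfies Assumption~\ref{as:extensions}(i)--(ii) by hypothesis -- with the perturbation $\varphi_k(\x) = \frac{1}{2}\x^\top\tilde{\A}_k\x$.

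The key step is verifying that this $\varphi_k$ meets condition~(iii) of Assumption~\ref{as:extensions} with the stated constants. Here I would take the trivial splitting $\varphi_k' \equiv 0$, which immediately gives $\norm{\nabla\varphi_k'(\x)} \leq \delta$ with $\delta = 0$, and $\varphi_k'' = \varphi_k$. Since only the symmetric part of $\tilde{\A}_k$ contributes to the quadratic form, we may assume $\tilde{\A}_k = \tilde{\A}_k^\top$ (or replace it by its symmetric part, which changes neither $f_k$ nor the norm bound), so that $\nabla\varphi_k''(\x) = \tilde{\A}_k\x$. Then
\[
	\norm{\nabla\varphi_k''(\x)} = \norm{\tilde{\A}_k\x} \leq \norm{\tilde{\A}_k}\,\norm{\x} \leq \gamma\norm{\x},
\]
which is exactly the bounded-gain requirement, with the same $\gamma$ that enters the small-gain condition of Proposition~\ref{pr:generic-online-optimization}.

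With Assumption~\ref{as:extensions} verified (and $\delta = 0$), I would close the argument by applying Proposition~\ref{pr:generic-online-optimization} directly: since the controller satisfies (c1)--(c2) and the error bound~\eqref{eq:asymptotic-error-general} is meaningful precisely when the small-gain condition (c3) holds for this $\gamma$, the proposition yields the claimed bound with $\delta = 0$. I expect essentially no obstacle here, as the content is the reduction itself. The only point requiring care is the gradient computation for $\varphi_k''$ -- confirming that the $\gamma$ bounding the Hessian perturbation $\tilde{\A}_k$ coincides with the $\gamma$ governing the bounded-gain disturbance term -- so that the conditions and the resulting error bound of Proposition~\ref{pr:generic-online-optimization} transfer verbatim.
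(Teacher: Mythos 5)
Your proposal is correct and follows essentially the same route as the paper: rewrite $f_k$ as the perturbed quadratic~\eqref{eq:general-cost} with $\varphi_k(\x) = \tfrac{1}{2}\x^\top\tilde{\A}_k\x$, take $\delta = 0$, bound $\norm{\nabla\varphi_k(\x)} \leq \gamma\norm{\x}$, and invoke Proposition~\ref{pr:generic-online-optimization}. If anything, your version is slightly more careful than the paper's (which writes $\varphi_k(\x) = \tilde{\A}_k\x$, an apparent typo for the quadratic form), since you handle the symmetrization of $\tilde{\A}_k$ explicitly and flag that the small-gain condition (c3) is implicitly needed for the bound to apply.
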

\begin{proof}
We can rewrite the cost~\eqref{eq:online-quadratic-A-tv} as $f_k(\x) = \hat{f}_k(\x) + \varphi_k(\x)$ with $\hat{f}_k(\x) = \frac{1}{2} \x^\top \A \x +  \bv_k^\top  \x $ and $\varphi_k(\x) = \tilde\A_k  \x$. The cost thus conforms to~\eqref{eq:general-cost} with $\delta = 0$ since $\norm{\nabla \varphi_k(\x)} \leq \norm{\tilde\A_k} \norm{\x} \leq \gamma \norm{\x}$, and we can apply Proposition~\ref{pr:generic-online-optimization}.
\end{proof}

Corollary~\eqref{cor:time-varying-A} shows how a time-varying perturbation on the Hessian of the quadratic cost leads to a bounded asymptotic tracking error. Clearly if $\gamma = 0$ (the Hessian is static), then we recover the result of Proposition~\ref{pr:convergence-control-algorithm}.

\subsubsection{Inexact internal model}\label{ssubsec:inexact-model}
The convergence results of section~\ref{subsec:convergence-A-ti} were derived under the assumption that precise knowledge of the model for $\{\bv_k\}_{k\in\N}$ is available, or, more precisely, that we know exactly the denominator $B_\mathrm{D}(z)$ of its Z-transform.
In this section we are interested in evaluating the performance of the online algorithm \eqref{eq:control-based-algorithm} for the quadratic problem~\eqref{eq:online-quadratic} when it relies on an inexact knowledge of $B_\mathrm{D}(z)$, and specifically on the inexact model
\begin{equation}\label{eq:approxmod}
	\hat{B}_\mathrm{D}(z) = z^m + \sum_{i = 0}^{m-1}\hat b_i z^i.
\end{equation}
In this set-up we can derive the following result, whose proof follows a similar argument to Proposition~\ref{pr:generic-online-optimization} based on the small gain theorem.

\begin{proposition}[Inexact internal model]\label{cor:inexact-internal-model}
Consider problem~\eqref{eq:online-quadratic} for which Assumptions~\ref{as:generic-online-problem} and~\ref{as:extensions}(ii) hold.
Assume that the controller $C(z) = \frac{C_\mathrm{N}(z)}{C_\mathrm{D}(z)}$ with $C_\mathrm{D}(z) = \hat{B}_\mathrm{D}(z)$ is such that $\hat B_\mathrm{D}(z) - C_\mathrm{N}(z) \lambda$ is stable for all $\lambda \in [\lmin, \lmax]$. 
Then  the output $\{ \x_k \}_{k \in \N}$ of the online algorithm~\eqref{eq:control-based-algorithm} is such that
\begin{align*}
	&\limsup_{k \to \infty} \norm{\x_k - \x_k^*} \leq  \beta \norm{(\hat{B}_\mathrm{D}(z)\I - C_\mathrm{N}(z)  \A)^{-1}}_\infty \norm{\mathbold\delta}_1
\end{align*}
where
$
	{\mathbold\delta} = \begin{bmatrix} b_0 - \hat{b}_0 & \cdots & b_{m-1} - \hat{b}_{m-1}\end{bmatrix},
$

\end{proposition}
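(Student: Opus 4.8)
The plan is to mirror the disturbance/small-gain argument of Proposition~\ref{pr:generic-online-optimization}, interpreting the model error $B_\mathrm{D}(z) - \hat{B}_\mathrm{D}(z)$ as an exogenous, bounded disturbance entering the closed loop of Figure~\ref{fig:block-diagram}. First I would use the invertibility of $\A$ (Assumption~\ref{as:generic-online-problem}) to reduce the claim to a bound on the gradient, via the static identity $\x_k - \x_k^* = \A^{-1}\e_k$. Combining $\X(z) = C(z)\E(z)$ and $\E(z) = \A\X(z) + \B(z)$ with the controller $C(z) = C_\mathrm{N}(z)/\hat{B}_\mathrm{D}(z)$ yields the closed-loop expression
$$\E(z) = \bigl(\I - C(z)\A\bigr)^{-1}\B(z) = \hat{B}_\mathrm{D}(z)\,\mathbold{M}(z)\,\B(z), \qquad \mathbold{M}(z) := \bigl(\hat{B}_\mathrm{D}(z)\I - C_\mathrm{N}(z)\A\bigr)^{-1},$$
where $\mathbold{M}(z)$ is a stable, strictly proper transfer matrix because, by hypothesis, each factor $\hat{B}_\mathrm{D}(z) - C_\mathrm{N}(z)\lambda$ of $\det(\hat{B}_\mathrm{D}(z)\I - C_\mathrm{N}(z)\A)$ is stable for $\lambda \in [\lmin,\lmax]$.

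The key step is to expose the mismatch as an additive disturbance. Writing $\hat{B}_\mathrm{D}(z) = B_\mathrm{D}(z) - \Delta(z)$ with $\Delta(z) = \sum_{i=0}^{m-1}(b_i - \hat b_i)z^i$ — the polynomial whose coefficient vector is $\bdelta$ — and invoking $B_\mathrm{D}(z)\B(z) = \B_\mathrm{N}(z)$ from Assumption~\ref{as:model-b}, I would rewrite $\hat{B}_\mathrm{D}(z)\B(z) = \B_\mathrm{N}(z) - \Delta(z)\B(z)$, so that
$$\E(z) = \mathbold{M}(z)\,\B_\mathrm{N}(z) \;-\; \mathbold{M}(z)\,\Delta(z)\B(z).$$
The first summand is the stable, strictly proper $\mathbold{M}(z)$ multiplied by a polynomial, hence a decaying transient that does not affect the $\limsup$. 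The second summand is $\mathbold{M}(z)$ driven by the disturbance $\dv_k$ with $\mathcal{Z}[\dv_k] = -\Delta(z)\B(z)$; in the time domain $\dv_k = -\sum_{i=0}^{m-1}(b_i - \hat b_i)\,\bv_{k+i}$, i.e. the residual obtained by applying the erroneous recursion $\hat{B}_\mathrm{D}$ to the true signal $\bv_k$. Assumption~\ref{as:extensions}(ii) then gives the uniform bound $\norm{\dv_k} \le \sum_{i=0}^{m-1}|b_i - \hat b_i|\,\norm{\bv_{k+i}} \le \beta\,\norm{\bdelta}_1$.

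It remains to bound the asymptotic response of the stable filter $\mathbold{M}(z)$ to $\dv_k$ by $\norm{\mathbold{M}(z)}_\infty$ times $\beta\,\norm{\bdelta}_1$; feeding the result through $\x_k - \x_k^* = \A^{-1}\e_k$ then delivers the claimed estimate
$$\limsup_{k\to\infty}\norm{\x_k - \x_k^*} \le \beta\,\norm{(\hat{B}_\mathrm{D}(z)\I - C_\mathrm{N}(z)\A)^{-1}}_\infty\,\norm{\bdelta}_1.$$
I expect this last step to be the main obstacle, and for the same reason as in Proposition~\ref{pr:generic-online-optimization}: since $B_\mathrm{D}(z)$ is only marginally stable, the disturbance $\dv_k$ does not vanish, so a crude bounded-input/bounded-output estimate is not available. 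Instead I would argue that, once the transients from the asymptotically stable poles of $\mathbold{M}(z)$ and of $B_\mathrm{D}(z)$ have died out, $\dv_k$ is asymptotically supported on the unit-circle modes of $B_\mathrm{D}(z)$; for such a quasi-periodic persistent input the steady-state amplification of the stable system $\mathbold{M}(z)$ is governed exactly by its worst-case frequency gain $\norm{\mathbold{M}(z)}_\infty = \max_\theta \sigma_{\max}(\mathbold{M}(e^{j\theta}))$. Making this precise — separating the vanishing transient from the persistent steady state and justifying that $\norm{\cdot}_\infty$ is the correct asymptotic gain for a superposition of sinusoids — is the delicate part, and is exactly the small-gain/$H_\infty$ ingredient borrowed from the proof of Proposition~\ref{pr:generic-online-optimization}.
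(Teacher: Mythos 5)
Your proposal is correct and takes essentially the same route as the paper's own proof: the same closed-loop expression $\E(z) = (\I - C(z)\A)^{-1}\B(z)$, the same decomposition into a vanishing term $(\hat{B}_\mathrm{D}(z)\I - C_\mathrm{N}(z)\A)^{-1}\B_\mathrm{N}(z)$ plus a mismatch term driven by $(B_\mathrm{D}(z)-\hat{B}_\mathrm{D}(z))\B(z)$, and the same final bounds obtained from $\norm{\bv_k}\le\beta$ together with $\norm{B_\mathrm{D}(z)-\hat{B}_\mathrm{D}(z)}_\infty\le\norm{\bdelta}_1$ (you merely move the mismatch polynomial into the time domain before applying the filter gain, whereas the paper keeps it in the frequency domain and splits the $\infty$-norm of the product). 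The one step you flag as delicate---using the worst-case frequency gain as the asymptotic amplification of a stable filter acting on a bounded, persistent input---is exactly what the paper dispatches by direct appeal to its fact (r1), so your treatment of that point is, if anything, more cautious than the published argument.
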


\begin{proof}
Let $\B(z)=\frac{\B_\mathrm{N}(z)}{B_\mathrm{D}(z)}$ be the Z-tranform of $\bv_k$.
Then, starting from~\eqref{eq:z-transform-e-eigendecomposition} it is straightforward to see that $\E(z)=\E'(z)+\E''(z)$
where 
\begin{align*}
	\E'(z) &=   \left(  \I + C(z) \A \right)^{-1} \frac{\B_N(z)}{\hat B_D(z)} \\
	\E''(z) &=   \left(  \I + C(z) \A \right)^{-1}\frac{{B}_\mathrm{D}(z) - \hat B_\mathrm{D}(z)}{\hat B_\mathrm{D}(z)}\B(z) 
\end{align*}
From Proposition \ref{pr:convergence-control-algorithm} we know that the inverse Z-transform $\e'_k$ of $\E'(z)$ is converging to zero. As far as $\E''(z)$ 
by applying (r1) in the appendix we can argue that its inverse Z-transform $\e''_k$ is such that
$$\norm{\e''_k}\leq \norm{ \left(  \I + C(z) \A \right)^{-1}\frac{{B}_\mathrm{D}(z) - \hat B_\mathrm{D}(z)}{\hat B_\mathrm{D}(z)}}_\infty \beta$$
Observe finally that
\begin{align*}
	&\norm{ \left( \I + C(z) \A \right)^{-1}\frac{{B}_\mathrm{D}(z) - \hat B_\mathrm{D}(z)}{\hat B_\mathrm{D}(z)}}_\infty \leq \\
	&\qquad \norm{(\hat{B}_\mathrm{D}(z)\I - C_\mathrm{N}(z)  \A)^{-1}}_\infty \ \norm{ {B}_\mathrm{D}(z) - \hat B_\mathrm{D}(z)}_\infty
\end{align*}
where we have
\begin{align*}
	\norm{ {B}_\mathrm{D}(z) - \hat B_\mathrm{D}(z)}_\infty&= \max_{\theta \in [0, 2\pi]}
	\left|\sum_{j=0}^{m-1} (b_j-\hat b_j ) e^{ij\theta}\right|\\
	&\le \sum_{j=0}^{m-1} |b_j-\hat b_j|=\norm{\mathbold\delta}_1,
\end{align*}
and where a bound for $\norm{(\hat{B}_\mathrm{D}(z)\I - C_\mathrm{N}(z)  \A)^{-1}}_\infty$ can be derived along the lines of Lemma~\ref{lem:bounds-infinity-norm}.
\end{proof}

We remark that if the internal model is exact then $\hat{B}_\mathrm{D}(z) = {B}_\mathrm{D}(z)$ and we recover the result of Proposition~\ref{pr:convergence-control-algorithm}.

\section{Simulations}\label{sec:simulations}

\subsection{Time-varying linear term}\label{subsec:numerical-tv-linear}
In this section we compare the performance of the different proposed algorithms when applied to problem~\eqref{eq:online-quadratic}, where only the linear term is time-varying\footnote{All the simulations were implemented using \texttt{tvopt} \cite{bastianello_tvopt_2021}.}.

We consider problem~\eqref{eq:online-quadratic} with $\x \in \R^n$, $n = 500$, and $\A = \V \Lm \V^\top$, where $\V$ is a randomly generated orthogonal matrix and $\Lm$ is diagonal with elements in $[1, 10]$. We employ four different models for the linear term $\{ \bv_k \}_{k \in \N}$: \emph{(i)} ramp $\bv_k = k \Ts \bar{\bv}$, $\bar{\bv} \in \R^n$; \emph{(ii)} sine $\bv_k = \sin\left( \omega k \Ts \right) \1$, $\omega = 1$; \emph{(iii)} sine plus ramp $\bv_k = \sin\left( \omega k \Ts \right) \1 + k \Ts \bar{\bv}$; \emph{(iv)} squared sine $\bv_k = \sin^2\left( \omega k \Ts \right) \1$.

In Figure~\ref{fig:tv_linear} we report the tracking error for the online gradient \cite{dallanese_optimization_2020}, the predicted online gradient\footnote{This algorithm is characterized by the update $\x_{k+1} = \x_k - \alpha \left(2 \nabla f_k(\x_k) - \nabla f_{k-1}(\x_k)\right)$, $k \in \N$.} \cite{bastianello_primal_2020}, and the control-based method~\eqref{eq:control-based-algorithm}.
\begin{figure}[!ht]
\centering
\includegraphics[width=0.475\textwidth]{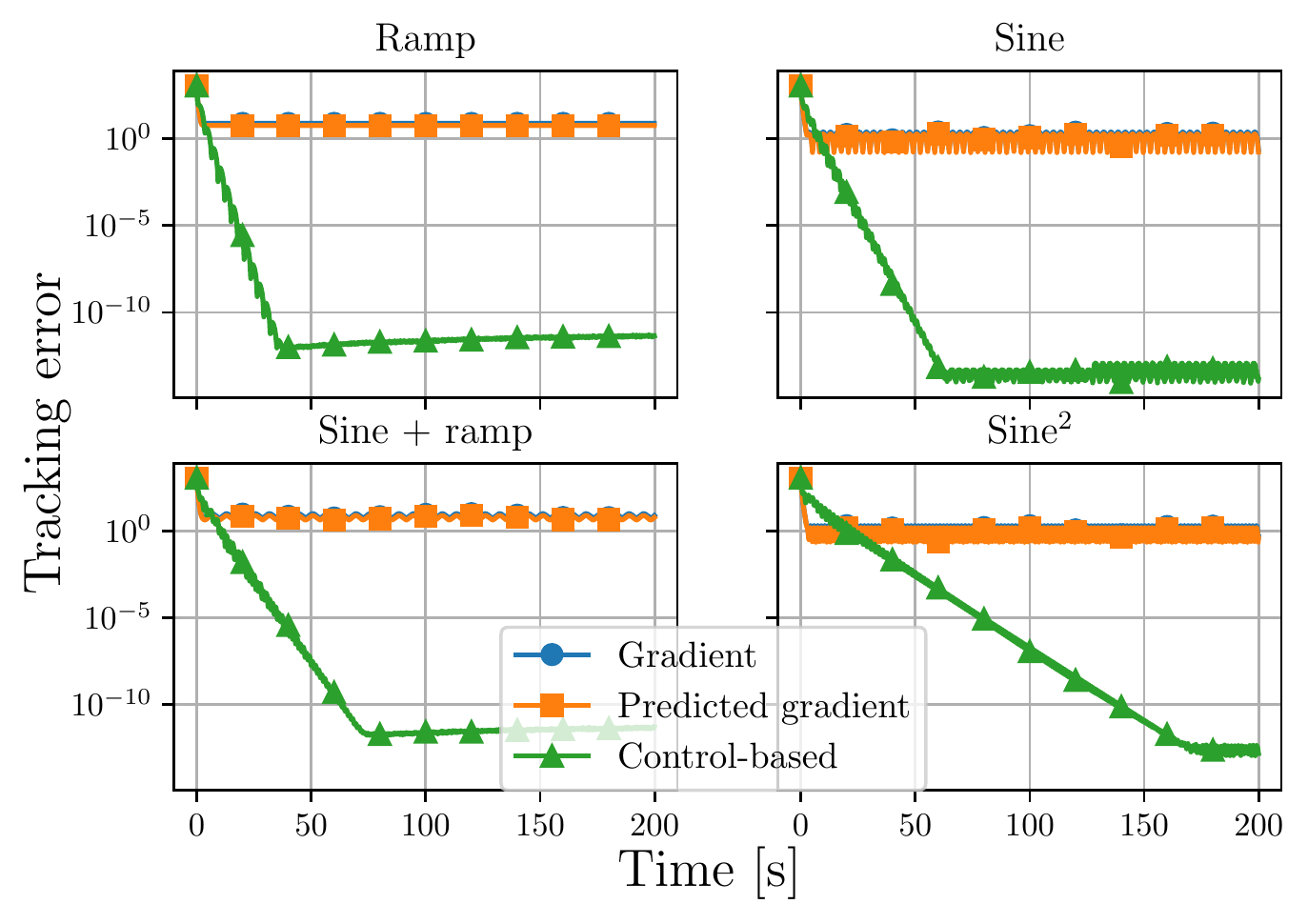}
\caption{Tracking error comparison for~\eqref{eq:online-quadratic} with different $\bv_k$.}
\label{fig:tv_linear}
\end{figure}
In accordance with the theoretical results of section~\ref{sec:control-based-algorithm}, the control-based method (with adequately chosen internal model) can achieve practically zero tracking error. On the other hand, both the gradient and predicted gradient only achieve a non-zero tracking error, with the error of the latter being smaller. This is further highlighted in Table~\ref{tab:ae-tv-linear}, which reports the \emph{asymptotic tracking error}\footnote{Computed as the maximum error over the last $4/5$ of the simulation.} for the different algorithms and problem models.

\begin{table}[!ht]
\centering
\caption{Asymptotic tracking error of the proposed algorithms for different models of $\bv_k$ in~\eqref{eq:online-quadratic}.}
\begin{tabular}{ccccc}
	Algorithm   & Ramp            & Sine   			& Sine + ramp 	  & Sine$^2$        \\
	\hline
	Grad.       & $7.61$ 	      & $2.44$			& $10.00$        & $2.03$			\\
	Pred. grad. & $5.81$         & $1.87$ 	    & $7.73$         & $1.55$			\\
	Control  	& \num{5.13e-12} & \num{1.21e-13} & \num{5.35e-12} & \num{1.93e-12} \\
    \hline
\end{tabular}
\label{tab:ae-tv-linear}
\end{table}

\smallskip

The previous numerical results were derived by applying the control-based~\eqref{eq:control-based-algorithm} equipped with an exact model of $\bv_k$ dynamics. We turn now to evaluating the performance of the proposed method when an inexact model is available, as discussed in section~\ref{subsec:convergence-A-ti}.
Consider the case~\emph{(ii)} of a sinusoidal linear term, with $\omega = 1$ being the unique parameter that defines the model of $\bv_k$, indeed $B_\mathrm{D}(z) = z^2 - 2 \cos(\omega \Ts) z - 1$. We run~\eqref{eq:control-based-algorithm} equipped with a (possibly inexact) model characterized by $\hat{b}_1 = 2 \cos(\Ts \hat{\omega})$, where $\hat{\omega}$ ranges in $[0.5, 1]$.

\begin{figure}[!ht]
\centering
\includegraphics[width=0.475\textwidth]{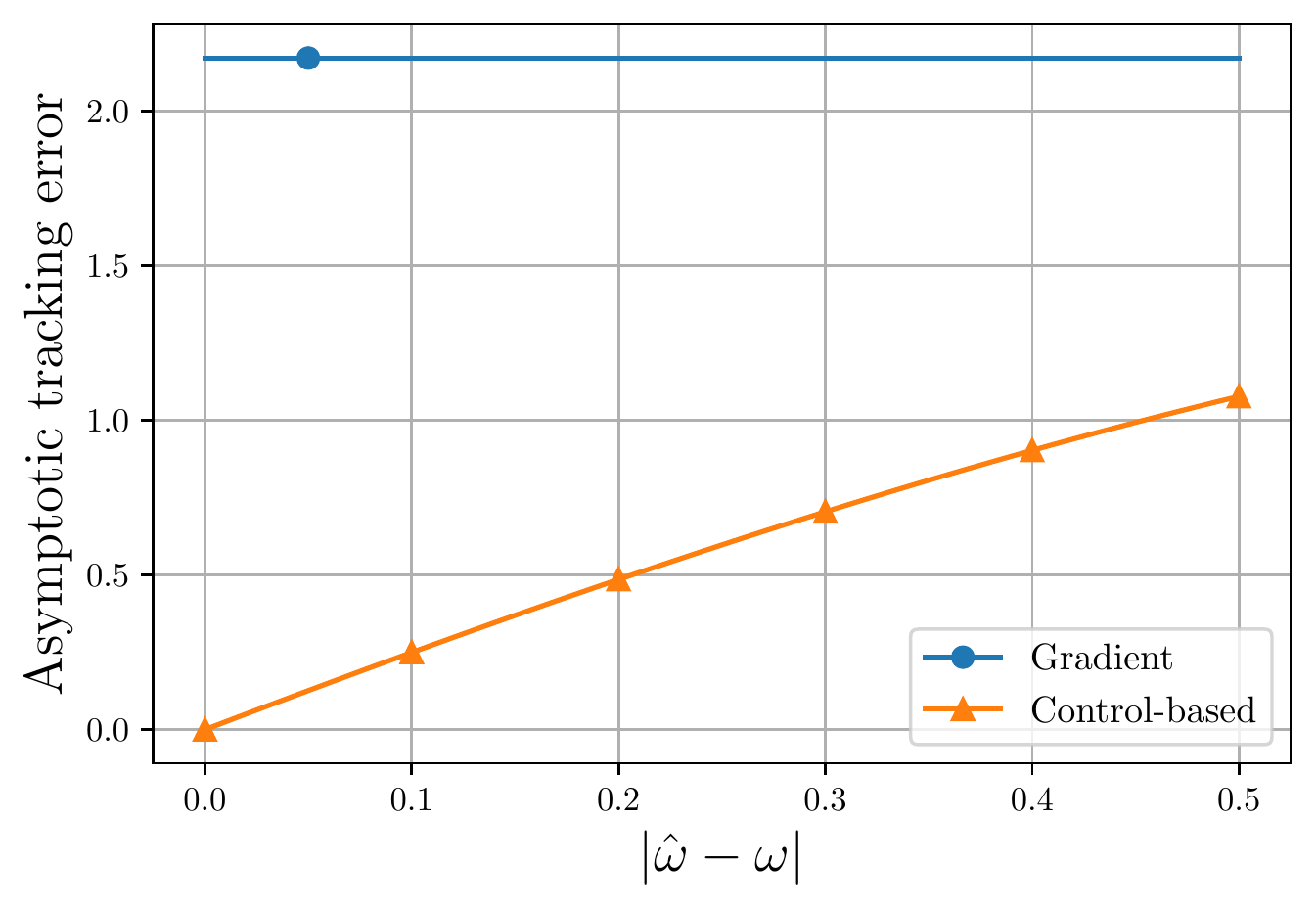}
\caption{Asymptotic error of~\eqref{eq:control-based-algorithm} when an approximate model of the sinusoidal $\bv_k$ (specifically, of its frequency) is employed.}
\label{fig:model_mismatch}
\end{figure}

Figure~\ref{fig:model_mismatch} depicts the resulting asymptotic tracking error of~\eqref{eq:control-based-algorithm} compared with the online gradient. As we can see, the performance of~\eqref{eq:control-based-algorithm} degrades as $|\hat{\omega} - \omega|$, in accordance with Proposition~\ref{pr:inexact-model}. However,~\eqref{eq:control-based-algorithm} still achieves better results than the online gradient, even when $\hat{\omega}$ is wrong by $50\%$ of the actual value of $\omega$.

\subsection{Time-varying quadratic term}\label{subsec:numerical-tv-quadratic}
We consider now problem~\eqref{eq:online-quadratic-A-tv} where $\A_k = \A + \tilde{\A}_k$, with $\A = \V \Lm \V^\top$ and $\tilde{\A}_k = \V \diag\{ \sin(\omega k \Ts) \dv \} \V^\top$ ($\dv \in \R^n$ being a vector with decreasing components), and $\Lm$ chosen such that $\A_k$ have eigenvalues in $[1, 10]$. For simplicity, the linear term is assumed constant, $\bv_k = \bar{\bv} \in \R^n$, $n = 500$.

As discussed in section~\ref{subsec:online-algorithm}, the proposed algorithm can be applied in this scenario, and in Figure~\ref{fig:tv_quadratic} we report its tracking error as compared to predicted and online gradient. Since to define algorithm~\eqref{eq:control-based-algorithm} we need to specify a model, we compared the three different options (cf. Example~\ref{ex:periodic})
\begin{equation}\label{eq:models-used}
	B_\mathrm{D}(z) = (z - 1) \prod_{\ell = 1}^L (z^2 - 2\cos(\ell \omega \Ts) z + 1), \quad L = 1, 2, 3.
\end{equation}

\begin{figure}[!ht]
\centering
\includegraphics[width=0.475\textwidth]{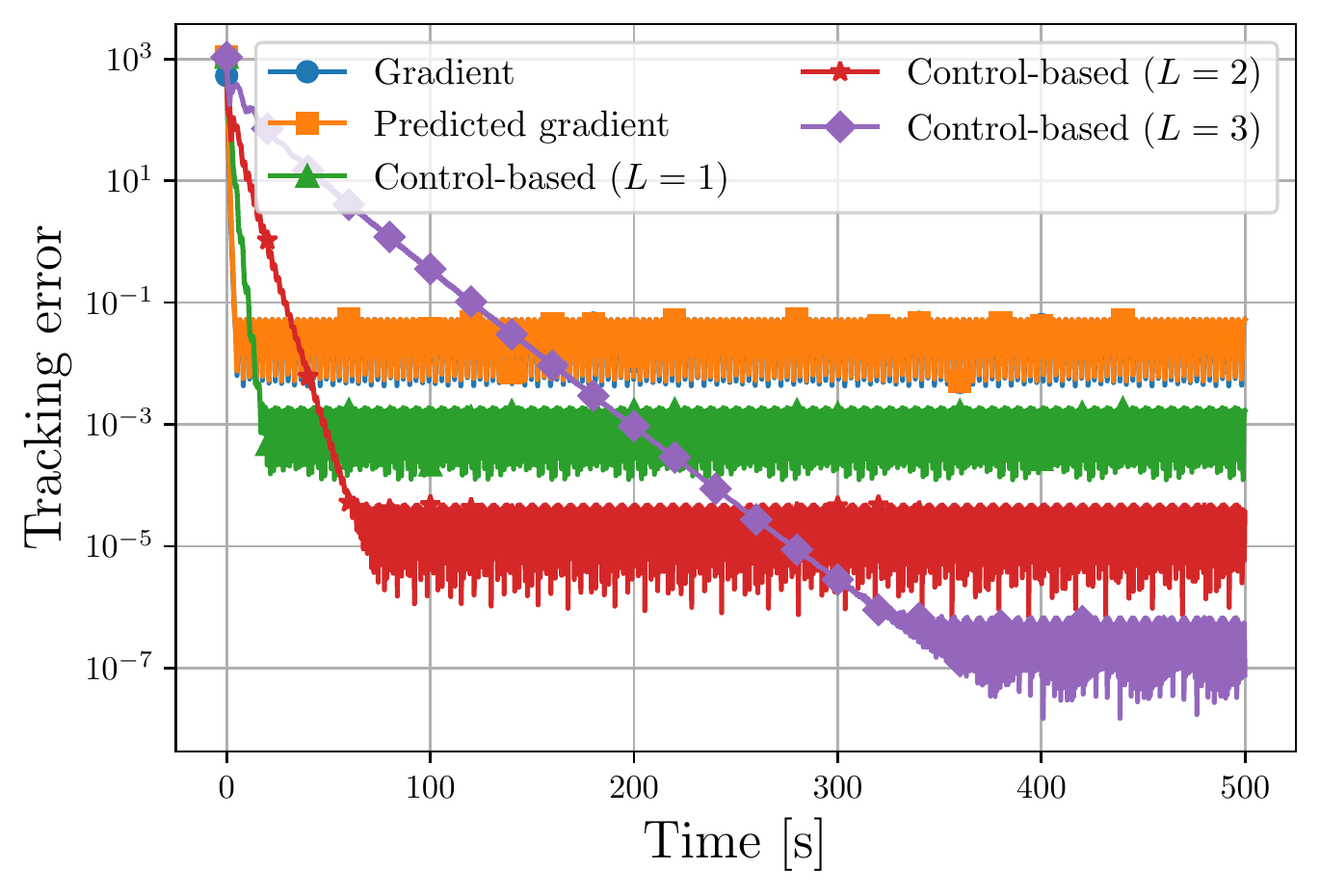}
\caption{Tracking error for the proposed algorithms applied to~\eqref{eq:online-quadratic-A-tv}, with the control-based algorithm~\eqref{eq:control-based-algorithm} employing three different approximate internal models.}
\label{fig:tv_quadratic}
\end{figure}

\begin{table}[!ht]
\caption{Comparison of asymptotic tracking errors}
\begin{subtable}{.5\linewidth}
\centering
\caption{Quadratic, $\A_k$ time-varying}
\begin{tabular}{cc}
	Algorithm   		& Asympt. err.    	\\
	\hline
	Grad.       		& \num{5.304e-2}  	\\
	Pred. grad. 		& \num{5.294e-2}    \\
	Control ($L = 1$)  	& \num{1.887e-3} 	\\
	Control ($L = 2$)  	& \num{4.793e-5} 	\\
	Control ($L = 3$)  	& \num{6.740e-7} 	\\
    \hline
\end{tabular}
\label{tab:ae-tv-quadratic}
\end{subtable}%
\begin{subtable}{.5\linewidth}
\centering
\caption{Non-quadratic}
\begin{tabular}{cc}
	Algorithm   		& Asympt. err.    	\\
	\hline
	Grad.       		& \num{2.823e-2}  	\\
	Pred. grad. 		& \num{1.901e-2}    \\
	Control ($L = 1$)  	& \num{1.323e-3} 	\\
	Control ($L = 2$)  	& \num{5.978e-5} 	\\
	Control ($L = 3$)  	& \num{1.662e-6} 	\\
    \hline
\end{tabular}
\label{tab:non-quadratic}
\end{subtable}
\end{table}

As we can see, the control-based method outperforms the other methods for all three choices of $L$, and we notice that larger models ($L$ larger) yield better performance, since they consider a higher number of multiples of the base frequency. Table~\ref{tab:ae-tv-quadratic} reports the exact asymptotic errors of the compared methods. It is important to notice that the control-based algorithm has a slower convergence rate than ``model-agnostic'' methods, which means that only after a longer transient does it reach a smaller tracking error. But considering that our goal is to improve the tracking error in the long run (cf. Example~\ref{ex:online-learning}), the trade-off of with a longer transient is justified.

\subsection{Non-quadratic}\label{subsec:numerical-non-quadratic}
In this section we discuss the application of the proposed algorithms to the non-quadratic problem~\eqref{eq:generic-online-optimization}. In particular, we consider the following cost function, adapted from \cite{simonetto_class_2016}:
\begin{equation}\label{eq:non-quadratic-cost}
	f_k(\x) = \frac{1}{2} \x^\top \A \x + \langle \bv, \x \rangle + \sin(\omega k \Ts) \log\left( 1 + \exp \langle \cv, \x \rangle \right)
\end{equation}
where $\omega = 1$, $\bv \in \R^n$, $n = 500$, is randomly generated, and $\cv \in \R^n$ is such that $\norm{\cv} = 1$. The matrix $\A$ is generated as in section~\ref{subsec:numerical-tv-linear}, and $f_k$ thus satisfies Assumption~\ref{as:extensions} with $\delta = \norm{\cv}^2/4$ and $\gamma = 0$.
In Figure~\ref{fig:non_quadratic} we report the tracking error of the proposed algorithm, with the control-based one using the three models~\eqref{eq:models-used}.
\begin{figure}[!ht]
\centering
\includegraphics[width=0.475\textwidth]{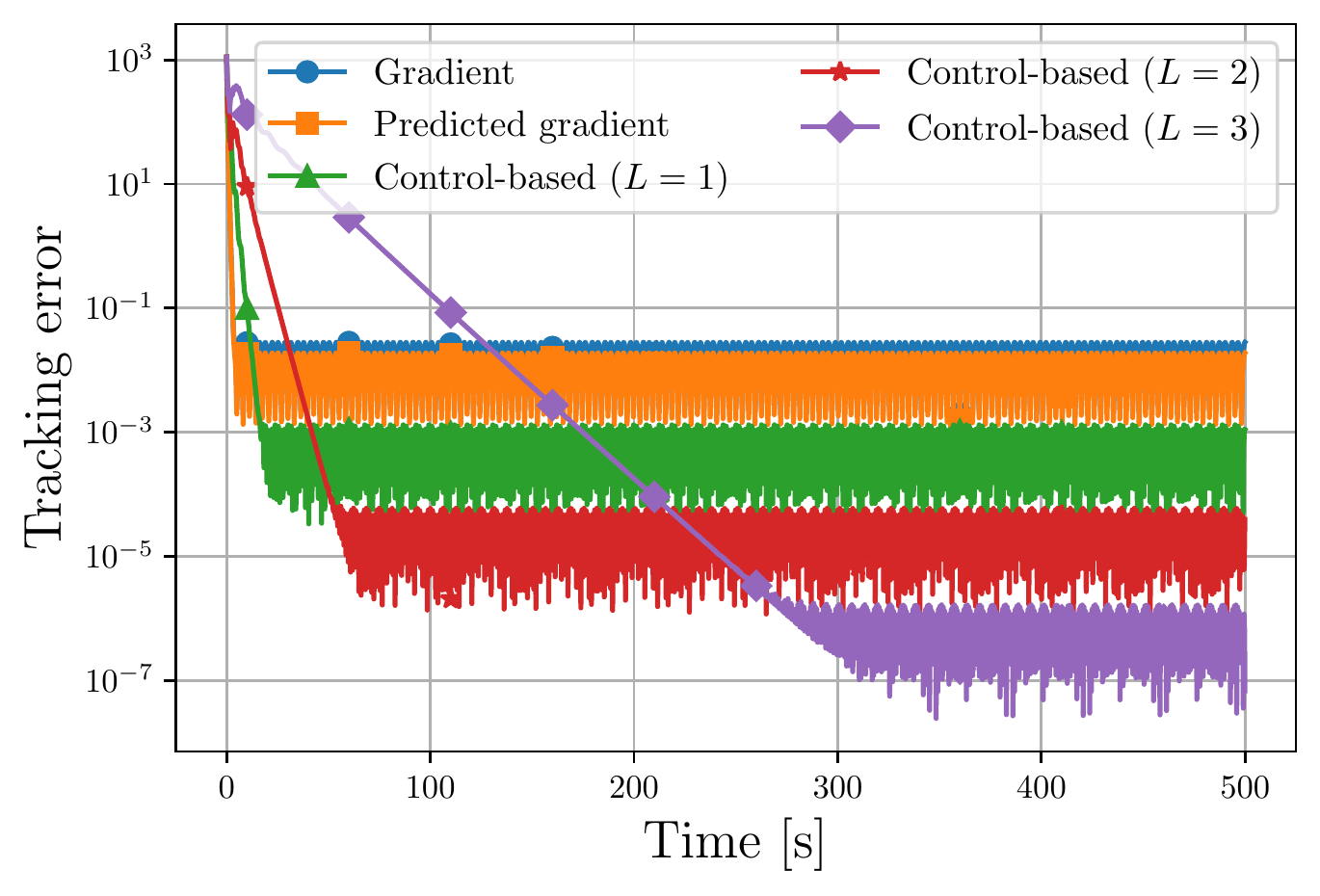}
\caption{Tracking error for the proposed algorithms applied to the non-quadratic~\eqref{eq:generic-online-optimization} with~\eqref{eq:non-quadratic-cost}.}
\label{fig:non_quadratic}
\end{figure}
As we can see, similarly to the case of section~\ref{subsec:numerical-tv-quadratic} above, the control-based method outperforms the other algorithms, to the cost of a slower convergence rate. Table~\ref{tab:non-quadratic} reports the exact asymptotic error achieved by the different algorithms.

\section{Conclusions}\label{sec:conclusions}
In this paper we proposed a model-based approach to the design of online optimization algorithms, with the goal of improving the tracking of the solution trajectory w.r.t. state-of-the-art methods. Focusing first on quadratic problems, we have proposed a novel online algorithm that achieves zero tracking error. Secondly, we have discussed the use of this algorithm for more general costs, and analyzed its convergence. The numerical results that we present validate our theoretical results and show the promise of our approach to online optimization algorithms design which outperforms state-of-the-art methods.
Future research directions that we will explore are the application of this paradigm to constrained problems (as stemming \emph{e.g.} in model predictive control), by designing primal-dual online algorithms that can handle time-varying equality and inequality constraints.

\appendix
\section{Proof of Proposition \ref{pr:generic-online-optimization}}
Before proceeding with the proof of Proposition \ref{pr:generic-online-optimization}, we briefly recall some relevant notions, see \cite{vidyasagar2011control} for a comprehensive introduction.

In the following, for any signal $\av_k \in \R^n$, $k\geq 0$, let $\norm{\av_k}_{\infty}=\sup_{k\geq 0} \norm{\av_k}$.
\begin{enumerate}
	\item[(r1)] Consider the system with input $\uv_k\in\R^n$, output $\y_k\in\R^n$ and stable transfer matrix $\T(z) \in \R^{n \times n}[z]$ so that the $\mathcal{Z}$-transforms $\U(z) = \mathcal{Z}[\uv_k]$ and $\Y(z) = \mathcal{Z}[\y_k]$ are in the relation $\Y(z) = \T(z) \U(z)$. 
Then we know that if $\norm{\uv_k}_\infty \leq \beta$, then $\norm{\y_k}_\infty \leq \norm{\T(z)}_\infty \beta$, where
$$\norm{(T(z)}_\infty :=  \max_{\theta \in [0, 2\pi]} \sigma_{\max}\left( T(e^{i \theta}) ) \right)$$

	\item[(r2)] Moreover, assume now that $\T(z)$ is in feedback with a (possibly non-linear) operator $\bphi : \R^n \to \R^n$, such that $\uv_k = \bphi(\y_k)$. If $\bphi$ is such that $\norm{\bphi(\x)} \leq \gamma \norm{\x}$ for $\gamma > 0$, then, according to the small gain theorem, if $\norm{\T(z)}_\infty \gamma < 1$, then the interconnection is asymptotically stable.
\end{enumerate}

\begin{proof}[of Proposition \ref{pr:generic-online-optimization}]

Let $\dv_k=\nabla \varphi_k(\x_k))$ and let $\D(z) = \mathcal{Z}[\dv_k]$. Following the same steps leading to~\eqref{eq:z-transform-e}, we can write
\begin{equation}\label{eq:sum-errors}
	\E(z) = (\I - C(z) \A)^{-1} (\B(z) + \D(z)) =: \E^b(z) + \E^d(z)
\end{equation}
where $\E^b(z)$ is given in \eqref{eq:z-transform-e} and $\E^d(z) = (\I - C(z) \A)^{-1} \D(z)$. The results of section~\ref{subsec:convergence-A-ti} imply that, by choosing a controller that satisfies (c1) and (c2), the first component $\e_k^b = \mathcal{Z}^{-1}[\E^b(z)]$ converges to zero. Hence we can focus on the second term $\e_k^d = \mathcal{Z}^{-1}[\E^d(z)]$.

By (r1) we have $\norm{\e_k^d}_\infty \leq \norm{(\I - C(z) \A)^{-1}}_\infty \norm{\dv_k}_\infty$ and, using Assumption~\ref{as:extensions}(iii),
\begin{equation}\label{eq:norm-d}
	\norm{\dv_k}_\infty \leq \norm{\nabla \varphi_k'(\x_k)} + \norm{\nabla \varphi_k''(\x_k)} \leq \delta + \gamma \norm{\x_k}_\infty.
\end{equation}
We need therefore to ensure that $\norm{\x_k}$ is bounded for all $k \in \N$ in order to guarantee that the disturbance is bounded as well.

By the fact that $\X(z) = C(z) \E(z)$, and using \eqref{eq:sum-errors} and (r1), we can write $\norm{\x_k}_\infty \leq \norm{C(z) (\I - C(z) \A)^{-1}}_\infty (\norm{\bv_k}_\infty + \norm{\dv_k}_\infty)$. Using now~\eqref{eq:norm-d} and the bound $\norm{\bv_k} \leq \beta$ (cf. Assumption~\ref{as:extensions}(ii)) we get
\begin{equation}\label{eq:norm-x}
	\norm{\x_k} \leq \norm{C(z) (\I - C(z) \A)^{-1}}_\infty \left( \beta + \delta + \gamma \norm{\x_k}_\infty \right).
\end{equation}
Clearly the first two terms are bounded, and we need to guarantee that $\norm{C(z) (\I - C(z) \A)^{-1}}_\infty \gamma \norm{\x_k}_\infty$ is as well. But this can be done by applying the small gain theorem recalled in (r2), therefore $\norm{\x_k}_\infty$ is bounded provided that $C(z)$ satisfies (c3).
Finally, using~\eqref{eq:norm-x} into~\eqref{eq:norm-d} and rearranging we get
$$
	\norm{\dv_k}_\infty \leq \frac{\delta + \beta \gamma \norm{C(z) (\I - C(z) \A)^{-1}}_\infty}{1 - \gamma \norm{C(z) (\I - C(z) \A)^{-1}}_\infty}.
$$
By $\norm{\e_k^d}_\infty \leq \norm{(\I - C(z) \A)^{-1}}_\infty \norm{\dv_k}_\infty$ the thesis follows.
\end{proof}

\smallskip

\bibliographystyle{IEEEtran}
\bibliography{references}

\end{document}